\newtheorem{Theorem}{Theorem}[section] 
\newtheorem{Definition}[Theorem]{Definition}
\newtheorem{Proposition}[Theorem]{Proposition}
\newtheorem{Lemma}[Theorem]{Lemma}
\newtheorem{Corollary}[Theorem]{Corollary}
\newtheorem{Remark}[Theorem]{Remark}
\def\R{\mathbb R}
\def\N{\mathbb N}
\def\C{\mathbb C}
\def\C{\mathbb C}
\def\B{\mathcal B}
\def\m{\textfrak{m}}
\newcommand{\one}{1\!\!\!\;\mathrm{l}}
\newcommand\scalH[2]{\left[ #1, #2\right]_H}
\title[$BV$ functions on convex domains in Wiener spaces]{$BV$ functions on convex domains in Wiener spaces}
\author[A. Lunardi]{Alessandra Lunardi}
\address{
Dipartimento di Matematica e Informatica\\
Universit\`a di Parma}
\email{alessandra.lunardi@unipr.it}
\author[M. Miranda]{Michele Miranda jr.}
\address{
Dipartimento di Matematica e Informatica\\
Universit\`a di Ferrara}
\email{michele.miranda@unife.it}
\author[D. Pallara]{Diego Pallara}
\address{
Dipartimento  di Matematica e Fisica 
``E. De Giorgi"\\
Universit\`a del Salento}
\email{diego.pallara@unisalento.it}
\subjclass[2010]{28C20, 26B30}
\keywords{Abstract Wiener spaces,  BV functions}
\begin{document}

 \begin{abstract}  
We study functions of bounded variation defined in an abstract Wiener space $X$, relating the variation of a function $u$ on a convex open set $\Omega\subset X$ to the behavior near $t=0$ of $T(t)u$, $T(t)$ being the Ornstein--Uhlenbeck semigroup in $\Omega$. 
 \end{abstract}

\maketitle

% \date{}

\section{Introduction}
This paper is devoted to bounded variation ($BV$) functions in open sets of infinite dimensional separable
Banach spaces endowed with Gaussian measures.   $BV$ functions defined in the whole space $X$ 
have been introduced in \cite{F}  and studied also in \cite{FH,AMMP}.  As in the finite dimensional 
case, they are strongly related to geometric measure theory and in particular to the theory of 
perimeters, see \cite{AMP,H1,CasLunMirNov}. 

We use notation and results from \cite{boga}, to which we refer for the general theory of Gaussian 
measures. Our setting is an abstract Wiener space, i.e., a separable Banach space $X$, with a 
nondegenerate centred Gaussian measure $\gamma$ and the induced Cameron-Martin Hilbert space $H\subset X$
(with inner product $[\cdot, \cdot]_H$ and norm $|\cdot|_H$). See Subsection \ref{wiener} for more details. 

A basic result of the theory of $BV$ functions in Wiener spaces is the characterization of the $BV$ functions 
in $X$ as the elements of the Orlicz space $L \ln L^{1/2}(X,\gamma)$ such that 
\begin{equation}\label{BVonX}
\lim_{t\to 0}\int_X |\nabla_H T_t u(x)|_H d\gamma(x)<+\infty . 
\end{equation}
In this case, the gradient $D_\gamma u$ of $u$ is an $H$-valued measure on $X$ and the above limit is just 
the total variation of $D_\gamma u$ in $X$
(see for instance in \cite[Proposition 4.1]{F}, \cite[Proposition 3.6]{FH} and \cite[Theorem 4.1]{AMMP}). 
Here, $(T_t)_{t\geq 0}$ denotes the Ornstein--Uhlenbeck semigroup
\begin{equation}\label{OUsgrp}
T_t u(x)=\int_X u(e^{-t}x+\sqrt{1-e^{-2t}}y)d\gamma(y), \quad t>0, 
\end{equation}
that, as well known,  plays the role of the heat semigroup in the context of Wiener spaces. A main 
feature in the proof is the monotonicity of the function
$t\mapsto \int_X |\nabla_H T_tu(x)|_H d\gamma(x)$.

If $X=\R^n$ is endowed with the Lebesgue measure, under some regularity assumptions on  $\Omega$  there 
are bounded extension operators from $W^{1,p}(\Omega )$ and $BV(\Omega)$ to  $W^{1,p}(\R^n )$ and 
$BV(\R^n)$, respectively. In infinite dimensions, the lack of analogous results makes  the study of 
Sobolev and $BV$ functions on domains more difficult.

In this paper we deal with $BV$ functions (and sets with finite perimeter) on convex open sets of abstract 
Wiener spaces. The theory is still at its very beginning, see \cite{H1,H2,HinUch08,CelLunTra,bogaext}. 
However, convex open sets have finite perimeter by   \cite{CasLunMirNov}. 

We propose a definition of $BV$ functions on domains in $X$ through an integration by parts formula 
against a suitable class of test functions, rather than merely as restrictions of $BV$ functions on 
the whole space. This is not a trivial issue because of the lack of smooth bump functions (for general 
$X$) on the one hand, and of bounded extension operators on the other hand. 

Moreover, we relate the variation of a function and the short time behaviour of the Ornstein-Uhlenbeck 
semigroup; besides the interest of extending similar results available in different contexts, see 
\cite{AMPP,BMP,GP}, such a relation has proved to be useful to describe  fine properties of $BV$ functions 
in Wiener spaces, see \cite{AmbFig2,AmbFigRuna}.
 
In particular, we get a characterisation of $BV$ functions on a convex open set $\Omega$ similar to 
\eqref{BVonX}. We consider the Ornstein--Uhlenbeck operator $L$ associated to the Dirichlet form 
\[
\mathscr{E}(u,v)=\int_\Omega \scalH{\nabla_H u(x)}{\nabla_H v(x)} d\gamma(x), \quad u, v\in W^{1, 2}(\Omega, \mu), 
\]
and the semigroup $(T_t)_{t\geq 0}$ generated by the realization of $L$ in $L^2(\Omega, \gamma)$. 
The main result of this paper is the next Theorem. 

\begin{Theorem}\label{mainThm}
Let $\Omega\subset X$ be an open convex set, and let  $u_0\in BV(X,\gamma)\cap L^2(X,\gamma)$ be  
such that $|D_\gamma u_0|(\partial \Omega)=0$. Then, for any $t>0$,
\[
\int_\Omega |\nabla_H T_t u_0(x)|_H d\gamma(x) \leq |D_\gamma u_0|(\Omega)
\]
and
\begin{align}\label{main}
\lim_{t\to 0} \int_\Omega |\nabla_H T_tu_0(x)|_H d\gamma(x)= |D_\gamma u_0|(\Omega).  
\end{align}
\end{Theorem}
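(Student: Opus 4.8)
The plan is to exploit a monotonicity property of a rescaling of $t\mapsto\int_\Omega|\nabla_HT_tu_0|_H\,d\gamma$ in order to reduce \eqref{main} to two matching one-sided bounds as $t\to0$: the lower bound will come from lower semicontinuity of the variation, and the upper bound --- which also gives the first inequality of the Theorem --- from a convexity-driven commutation estimate for $(T_t)$ on $\Omega$.

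First I would record the pointwise gradient estimate
\[
|\nabla_HT_tg|_H\le e^{-t}\,T_t\bigl(|\nabla_Hg|_H\bigr)\qquad\gamma\text{-a.e. in }\Omega,\qquad g\in W^{1,2}(\Omega,\gamma),
\]
which is exactly where the convexity of $\Omega$ enters, through the classical Bakry--Emery/reflection estimate on convex Euclidean domains (convexity giving the correct sign to the boundary term in the relevant integration by parts) combined with a cylindrical approximation of $\Omega$. Applying it to $g=T_su_0$, which for $s>0$ lies in $D(L)\subset W^{1,2}(\Omega,\gamma)$ by the smoothing property of the semigroup, integrating over $\Omega$ and using that $(T_t)$ is symmetric and sub-Markovian, so $\int_\Omega T_rf\,d\gamma\le\int_\Omega f\,d\gamma$ for $f\ge0$, one finds that $s\mapsto e^s\int_\Omega|\nabla_HT_su_0|_H\,d\gamma$ is non-increasing on $(0,+\infty)$. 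Hence the limit in \eqref{main} exists in $[0,+\infty]$ and $\int_\Omega|\nabla_HT_tu_0|_H\,d\gamma\le e^{-t}\lim_{s\to0}\int_\Omega|\nabla_HT_su_0|_H\,d\gamma$ for every $t>0$; it remains to compute that limit.

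For the lower bound, since $u_0|_\Omega\in L^2(\Omega,\gamma)$ and $(T_t)$ is strongly continuous on $L^2(\Omega,\gamma)$ (hence, $\gamma$ being finite, on $L^1(\Omega,\gamma)$), $T_tu_0\to u_0$ in $L^1(\Omega,\gamma)$ as $t\to0$; as each $T_tu_0\in W^{1,2}(\Omega,\gamma)\subset BV(\Omega)$ has variation $\int_\Omega|\nabla_HT_tu_0|_H\,d\gamma$, lower semicontinuity of the variation with respect to $L^1(\Omega,\gamma)$-convergence together with the identification of $|D_\gamma u_0|(\Omega)$ with the variation of $u_0$ on $\Omega$ (valid because $|D_\gamma u_0|(\partial\Omega)=0$) give $|D_\gamma u_0|(\Omega)\le\liminf_{t\to0}\int_\Omega|\nabla_HT_tu_0|_H\,d\gamma$; this step uses neither convexity nor, beyond that identification, the boundary hypothesis. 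For the upper bound, rewrite the gradient estimate as the commutation identity $T_t(\mathrm{div}_\gamma\Phi)=e^{-t}\,\mathrm{div}_\gamma(\vec T_t\Phi)$, where $\vec T_t$ is the vector-valued Ornstein--Uhlenbeck semigroup on $\Omega$, with $|\vec T_t\Phi|_H\le T_t(|\Phi|_H)$. Then, for any admissible test vector field $\Phi$ for $BV(\Omega)$ with $|\Phi|_H\le1$, using the integration by parts formula defining $\mathrm{div}_\gamma$ and self-adjointness of $T_t$ on $L^2(\Omega,\gamma)$,
\[
\int_\Omega\scalH{\nabla_HT_tu_0}{\Phi}\,d\gamma=-\int_\Omega u_0\,T_t(\mathrm{div}_\gamma\Phi)\,d\gamma=-e^{-t}\int_\Omega u_0\,\mathrm{div}_\gamma(\vec T_t\Phi)\,d\gamma\le e^{-t}\,|D_\gamma u_0|(\Omega),
\]
since $\vec T_t\Phi$ is again an admissible test field with $|\vec T_t\Phi|_H\le T_t1\le1$ and $|D_\gamma u_0|(\Omega)$ is the variation of $u_0$ on $\Omega$. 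Taking the supremum over $\Phi$ yields $\int_\Omega|\nabla_HT_tu_0|_H\,d\gamma\le e^{-t}|D_\gamma u_0|(\Omega)\le|D_\gamma u_0|(\Omega)$ for every $t>0$, which is the first inequality of the Theorem, and letting $t\to0$ gives $\limsup_{t\to0}\int_\Omega|\nabla_HT_tu_0|_H\,d\gamma\le|D_\gamma u_0|(\Omega)$. Combined with the lower bound, this proves \eqref{main}.

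The main difficulty will be the gradient/commutation estimate on $\Omega$. One must transfer the Euclidean estimate for convex domains to the infinite-dimensional convex set $\Omega$, which requires approximating $\Omega$ by cylindrical convex sets and controlling the corresponding Dirichlet forms and Ornstein--Uhlenbeck semigroups in the limit; and one must check that $\vec T_t\Phi$ (or a suitable approximation of it) is an admissible test field for $BV(\Omega)$, that is, compatible with the Neumann-type boundary behaviour encoded in the form $\mathscr{E}$. The hypothesis $|D_\gamma u_0|(\partial\Omega)=0$ is used precisely to identify $|D_\gamma u_0|(\Omega)$ with the intrinsic variation of $u_0$ on $\Omega$ and, more generally, to rule out boundary contributions when passing between $u_0$ on $X$ and its restriction to $\Omega$.
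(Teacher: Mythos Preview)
Your lower bound argument via lower semicontinuity is fine and matches the paper's. The monotonicity of $s\mapsto e^s\int_\Omega|\nabla_HT_su_0|_H\,d\gamma$ would indeed follow from the pointwise Bakry--\'Emery estimate you state, and that estimate is plausible on a convex Neumann domain; but note that establishing it in the infinite-dimensional setting already requires essentially all of the machinery the paper builds (finite-dimensional computation plus cylindrical approximation of $\Omega$ plus convergence of the approximating semigroups).

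The genuine gap is the step where you ``rewrite the gradient estimate as the commutation identity $T_t(\mathrm{div}_\gamma\Phi)=e^{-t}\,\mathrm{div}_\gamma(\vec T_t\Phi)$''. On the whole space this identity holds because $\partial_jT_t=e^{-t}T_t\partial_j$ exactly, but on a domain with Neumann boundary condition the commutation fails: the Neumann condition couples the components of $\nabla_HT_tf$ along $\partial\Omega$, and applying the scalar semigroup componentwise to $\Phi$ does not produce a vector field for which the identity is valid. The Bakry--\'Emery estimate is only an \emph{inequality}, and it does not dualise into an identity for the divergence. Consequently the chain $\int_\Omega u_0\,T_t(\mathrm{div}_\gamma\Phi)\,d\gamma=e^{-t}\int_\Omega u_0\,\mathrm{div}_\gamma(\vec T_t\Phi)\,d\gamma$ is unjustified, and with it the upper bound. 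The admissibility issue you flag for $\vec T_t\Phi$ is a symptom of the same problem: there is no natural candidate for $\vec T_t$ on $\Omega$ that simultaneously satisfies the commutation relation and lands in the test class.

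The paper avoids this by never invoking a commutation identity on $\Omega$. It proves the integrated monotonicity $t\mapsto\int_{\mathscr O}|\nabla T^F_tv_0|\,d\gamma_F$ directly in finite dimensions by differentiating in $t$ and integrating by parts, where convexity enters through the sign of $\langle J\nu^{\mathscr O}\xi,\xi\rangle$ on the boundary; it then passes to $BV$ initial data via Anzellotti--Giaquinta approximation (Lemma~\ref{AnzGiaW12}), and transfers the estimate to $\Omega$ via cylindrical approximations $\Omega_n\searrow\overline\Omega$ together with resolvent convergence $T^{(n)}_t\to T_t$ in $W^{1,2}(\Omega,\gamma)$ and the canonical cylindrical approximations $\mathbb E_ju_0$, using the hypothesis $|D_\gamma u_0|(\partial\Omega)=0$ only at the very end to identify $\lim_n|D_\gamma u_0|(\overline{\Omega}_n)$ with $|D_\gamma u_0|(\Omega)$. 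If you want to salvage your route, you could use the gradient inequality directly on Sobolev approximants of $u_0$ and then pass to the limit in variation---but producing such approximants on $\Omega$ (rather than on the cylindrical $\Omega_n$) is itself nontrivial and is precisely what the paper's detour through finite dimensions circumvents.
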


The study of the Ornstein--Uhlenbeck semigroup on domains is less straightforward than in the whole 
$X$, since no explicit formula like \eqref{OUsgrp} is available for $T(t)$; nevertheless, the function 
$t\mapsto \int_{\Omega} |\nabla_H T_tu(x)|_H d\gamma(x)$ is still monotone. In the proof of monotonicity  
the convexity of $\Omega$ plays an essential role.

% We remark that the choice of $ W^{1, 2}(\Omega, \mu)$ as the domain of $\mathscr{E}$ gives a sort of Neumann boundary 
% condition, and this is natural if one wants to take into account the variation of the function 
% $u_0$ only in the interior of $\Omega$ and not on the boundary. For example, if $u_0=\chi_E$ with 
% $E$ smooth, $|Du_0|(\Omega)$ is the measure of $\partial E\cap\Omega$ and therefore 
% in order to have \eqref{main} the semigroup must keep $E$ as close to the value 1 as possible near 
% $E\cap\partial\Omega$, which is what happens under Neumann boundary conditions. More 
% general operators and boundary conditions are considered in the Euclidean case in \cite{AMPP}. 

The paper is organized as follows. In Section \ref{notations} we recall basic notations and results, both 
in the finite and infinite dimensional case. In Section \ref{3} we prove Theorem \ref{mainThm}. 
In the appendix we collect some useful properties of convex domains.

%%%%%%%%%%%%%%%%%%%%%%%%%%%%%%%%%%%%%%%%%%%%%%%%%%%%%%%%%%%%%%%%%%%%%%%%%%

\section{Notations and preliminaries}          
 \label{notations}

%%%%%%%%%%%%%%%%%%%%%%%%%%%%%%%%%%%%%%%%%%%%%%%%%%%%%%%%%%%%%%%%%%%%%%%%%%

In this section we introduce the basic notation and recall some preliminary results, both in finite and 
in infinite dimensional spaces. Let us recall that $u:\R^d\to\R$ is in 
$BV_{\rm loc}(\R^d)$ if $u\in L^1_{\rm loc}(\R^d)$ and 
\[
V(u,{\mathscr O})=\sup\Bigl\{\int_{\mathscr O} u(x)\sum_{j=1}^d \frac{\partial\varphi_j}{\partial x_j}(x)\,dx:
\ \varphi\in C_c^1({\mathscr O},\R^d),\ |\varphi(x)|\leq 1\ \forall\, x\in{\mathscr O}\Bigr\}<\infty 
\]
for every bounded open set ${\mathscr O}\subset\R^d$. In this case, the distributional gradient of 
$u$ is a locally finite Radon measure with values in $\R^d$. For more information about $BV$ functions 
and their fine properties in finite dimensions we refer to \cite{AmbFusPal00}. In the next subsection 
we describe our finite dimensional Gaussian framework and the related class of $BV$ functions. We refer 
to \cite{boga} for a comprehensive presentation of infinite dimensional gaussian analysis, to 
\cite{MirNovPal} for a survey on $BV$ functions in infinite dimensions. 

\subsection{$BV$ functions in finite dimensions} 

Let $\mathscr{O}$ be an open set in a $d$-dimensional Euclidean space $F$ endowed with the {\em standard} 
Gaussian measure, 
\[ 
d\gamma_F(y) = \frac{1}{(2\pi)^{d/2}}\exp(-|y|^2/2)dy = G_d(y)dy. 
\]
We define the  (negative)  formal adjoint of the partial derivative $\partial_{y_j}$ by 
$\partial_{y_j}^*\varphi(y)= \partial_{y_j}\varphi(y)-y_j\varphi(y)$,  and the Gaussian divergence by 
\begin{equation}   \label{divF}
{\rm div}_F\varphi=\sum_{j=1}^d\partial^*_{y_j}\varphi_j
,\qquad \varphi=\{\varphi_1,\ldots,\varphi_d\} \in C^1({\mathscr O},F).
\end{equation}
%Notice that if the Gaussian measure is not the standard one, then the covariance operator enters into the 
%definition of $\partial^*$ and the divergence. 

Given a function $u\in L^1(\mathscr{O},\gamma_F)$, we define its Gaussian total variation by
\begin{equation}\label{defvar}
|D_{\gamma_F} u|(\mathscr{O})=\sup 
\left\{\int_\mathscr{O} u(y){\rm div}_F \varphi (y) d\gamma_F(y):\ 
\varphi\in C^1_c(\mathscr{O},F), \;|\varphi(y)|\leq 1 \;\forall y\in \mathscr{O}\right\}.
\end{equation}
The space $BV(\mathscr{O},\gamma_F)$ is defined as the set of functions $u\in L^1(\mathscr{O},\gamma_F)$
with $|D_{\gamma_F}u|(\mathscr{O})<+\infty$. This space is characterised by the fact that the distributional
gradient of any $u\in BV(\mathscr{O},\gamma_F)$  is a vector valued finite measure $\mu=(\mu_1,\ldots,\mu_d)$, 
namely 
\[
\int_\mathscr{O} u(y) \partial^*_{y_i}g(y)d\gamma_F (y)
=\int_\mathscr{O} g(y)d\mu_i (y),\qquad \forall g\in C^1_c(\mathscr{O}), \;i=1,\dots,d.
\]
For such a measure we have   $|\mu|(\mathscr{O})=|D_{\gamma_F}u|(\mathscr{O})$; moreover
for any open set $A\subset \mathscr{O}$ we have
\[
|D_{\gamma_F} u|(A)=\sup \left\{\int_A u(y){\rm div}_F \varphi (y) d\gamma_F(y):\;
\varphi\in C^1_c(A,F),\; |\varphi(y)|\leq 1  \;\forall y\in A\right\}.
\]
The space $BV(\mathscr{O},\gamma_F)$ is a Banach space with the norm
\begin{equation}\label{norm}
\|u\|_{BV} =\|u\|_{L^1(\mathscr{O},\gamma_F)}+|D_{\gamma_F}u|(\mathscr{O}).
\end{equation}
We refer to \cite{CFMP} and to \cite{CMN} for more details on sets with finite perimeter and functions 
with bounded variation in Gaussian spaces. Let us point out that, due to the regularity and non degeneracy 
of the standard Gaussian density, local and fine properties of Gaussian $BV$ functions do not differ 
from those of classical $BV_{\rm loc}$ functions and indeed 
$BV({\mathscr O})\subset BV({\mathscr O},\gamma_F)\subset BV_{\rm loc}({\mathscr O})$. 
As a matter of fact, a Gaussian $BV$ function defines a finite measure that we denote by 
$D_{\gamma_F} u$ for which the following integration by parts formula holds, 
\[
\int_{\mathscr{O}} u(x) {\rm div}_F \varphi(x)d\gamma_F(x) =
\int_{\mathscr{O}} \langle\varphi, D_{\gamma_F} u\rangle,
\qquad \forall \varphi\in C^1_c(\mathscr{O};{\mathbb R}^d).
\]
Such a measure is absolutely continuous with respect to the standard total variation measure of $u$ and
$D_{\gamma_F} u=G_d Du$. Of course, global properties are very different, as the Gaussian density vanishes 
at infinity. Moreover, if $u\in W^{1,1}({\mathscr O},\gamma_F)$, then $u\in BV({\mathscr O},\gamma_F)$ and 
\begin{equation}\label{W11subsetBV}
|D_{\gamma_F}u|({\mathscr O})=\int_{\mathscr O} |\nabla u(y)| d\gamma_F(y). 
\end{equation} 
To some extent the above norm \eqref{norm} is too strong, for instance smooth functions are not dense in 
$BV(\mathscr{O},\gamma_F)$. However $BV$ functions may be approximated by smooth functions 
in the sense of the so called approximation in variation, as the next lemma shows. The proof we present 
is based on classical ideas that require a minor adaptation to the present situation.

Here and in the following, we denote by $B_R(x)$ the open ball centred at $x$ with radius $R$. 

\begin{Lemma}\label{AnzGiaW12}
Let $v_0\in BV(\mathscr{O},\gamma_F)\cap L^2({\mathscr O},\gamma_F)$. Then  for any $\varepsilon >0$ there exists
$v_\varepsilon\in W^{1,2}(\mathscr{O},\gamma_F)$ such that
\[
\| v_0-v_\varepsilon\|_{L^2(\mathscr{O},\gamma_F)}< \varepsilon
\qquad \text{and}\qquad 
\left|
\int_\mathscr{O} |\nabla v_\varepsilon(x)|d\gamma_F(x)-|D_{\gamma_F}v_0|(\mathscr{O})
\right|<\varepsilon .
\]
\end{Lemma}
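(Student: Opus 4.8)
The plan is to prove the lemma by the classical mollification argument of Anzellotti--Giaquinta, adapted to the Gaussian weight. Since $v_0 \in BV(\mathscr{O},\gamma_F) \subset BV_{\rm loc}(\mathscr{O})$, the distributional gradient is a locally finite $\R^d$-valued Radon measure, and one has the usual lower semicontinuity of the variation along smooth approximations. The difference with the Euclidean case is only that the reference measure carries the density $G_d$, so I must control the error terms coming from the $y_j$-factor in $\partial^*_{y_j}$ and from differentiating $G_d$. The main point is that, locally, $G_d$ is smooth and bounded above and below away from zero, so mollifications behave exactly as in the flat case on compact subsets; the global behaviour is handled by an exhaustion of $\mathscr{O}$.

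First I would fix an increasing sequence of open sets $\mathscr{O}_k$ with $\overline{\mathscr{O}_k}$ compact, $\overline{\mathscr{O}_k} \subset \mathscr{O}_{k+1}$, and $\bigcup_k \mathscr{O}_k = \mathscr{O}$, together with a smooth partition of unity $\{\zeta_k\}$ subordinate to the cover $\{\mathscr{O}_{k+1} \setminus \overline{\mathscr{O}_{k-1}}\}$. For $\delta>0$ small (depending on $k$) let $\rho_\delta$ be a standard mollifier and set
\[
v_\varepsilon = \sum_{k} \rho_{\delta_k} * (\zeta_k v_0),
\]
choosing each $\delta_k$ small enough that $\operatorname{supp} \rho_{\delta_k} * (\zeta_k v_0)$ stays in a fixed neighbourhood of $\operatorname{supp}\zeta_k$ inside $\mathscr{O}$, and small enough to make the $L^2$ and the variation errors on the $k$-th annulus smaller than $\varepsilon\, 2^{-k}$. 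The $L^2$ bound $\|v_0 - v_\varepsilon\|_{L^2(\mathscr{O},\gamma_F)} < \varepsilon$ follows because $G_d$ is bounded on each $\overline{\mathscr{O}_k}$ and $v_0 \in L^2$; regularity $v_\varepsilon \in W^{1,2}(\mathscr{O},\gamma_F)$ follows because $v_\varepsilon$ is smooth and, by the finite-overlap property of the cover, locally a finite sum, with the $W^{1,2}$-norm estimated by $\|v_0\|_{BV} + \varepsilon$ on compact pieces while the Gaussian weight keeps everything integrable.

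For the variation estimate one writes $\nabla v_\varepsilon = \sum_k \nabla\big(\rho_{\delta_k}*(\zeta_k v_0)\big)$, and uses the Leibniz rule: $\nabla(\zeta_k v_0)$ in the distributional sense is $\zeta_k D v_0 + v_0 \nabla\zeta_k$, so $\rho_{\delta_k}*\nabla(\zeta_k v_0) = \rho_{\delta_k}*(\zeta_k D v_0) + \rho_{\delta_k}*(v_0 \nabla\zeta_k)$. Summing over $k$, the terms $\sum_k \rho_{\delta_k}*(v_0\nabla\zeta_k)$ telescope away in the limit (since $\sum_k \nabla\zeta_k \equiv 0$) and can be made arbitrarily small in $L^1(\gamma_F)$ by shrinking the $\delta_k$; the remaining term is close in $L^1$ to $|Dv_0|$ on compacts. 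Converting between the Lebesgue variation $|Dv_0|$ and the Gaussian variation via $D_{\gamma_F} v_0 = G_d\, D v_0$, and using that on each $\mathscr{O}_k$ the weight $G_d$ is continuous and strictly positive, one gets $\int_{\mathscr{O}} |\nabla v_\varepsilon|\, d\gamma_F \le |D_{\gamma_F} v_0|(\mathscr{O}) + \varepsilon$; the matching lower bound $\int_{\mathscr{O}} |\nabla v_\varepsilon|\, d\gamma_F \ge |D_{\gamma_F} v_0|(\mathscr{O}) - \varepsilon$ comes from lower semicontinuity of the Gaussian variation under $L^1_{\rm loc}$ convergence, which holds because the test functions in \eqref{defvar} with $|\varphi|\le 1$ pair against $v_\varepsilon$ and converge to the pairing against $v_0$. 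Combining the two inequalities gives the second estimate in the statement.

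The main obstacle is bookkeeping the mollification radii $\delta_k$ so that simultaneously (i) the supports stay inside $\mathscr{O}$, (ii) the cross terms $\rho_{\delta_k}*(v_0\nabla\zeta_k)$ are negligible in $L^1(\gamma_F)$, and (iii) the error from replacing $|Dv_0|(\mathscr{O}\setminus\mathscr{O}_{k})$ by a small tail is controlled — this last point uses that $|D_{\gamma_F} v_0|$ is a \emph{finite} measure, so its mass outside $\mathscr{O}_k$ tends to $0$. None of these steps is deep; the only genuine adaptation relative to the Euclidean statement is keeping track of the weight $G_d$, which is harmless on compact sets, and using the finiteness of the Gaussian variation to handle the region near infinity where $G_d$ decays.
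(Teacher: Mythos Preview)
Your approach is essentially the paper's: a Meyers--Serrin/Anzellotti--Giaquinta construction via a locally finite cover, partition of unity, and mollification, with the Gaussian weight handled by local comparability to Lebesgue and the lower bound coming from lower semicontinuity. The paper, however, begins with one extra step you omit: it first multiplies $v_0$ by a cutoff $\vartheta_R$ to reduce to the case where $v_0$ has compact support in $B_R(0)$. After that reduction only finitely many terms in the sum $\sum_j (v\varphi_j)*\varrho_{\varepsilon_j}$ are nonzero, so $v_\varepsilon$ is smooth with compact support and membership in $W^{1,2}(\mathscr O,\gamma_F)$ is automatic; it also gives the clean pointwise bound $G_d(x)/G_d(y)\le e^{\varepsilon_j R+\varepsilon_j^2/2}$ used to push the weight through the convolution.

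This is exactly where your sketch has a gap. Your sentence ``the $W^{1,2}$-norm estimated by $\|v_0\|_{BV}+\varepsilon$ on compact pieces while the Gaussian weight keeps everything integrable'' does not justify $\nabla v_\varepsilon\in L^2(\mathscr O,\gamma_F)$. The quantity $\|v_0\|_{BV}$ controls the $L^1$ norm of $\nabla v_\varepsilon$, not the $L^2$ norm: mollifying the singular part of $Dv_0$ at scale $\delta_k$ can produce gradients of size $\sim\delta_k^{-1}$ on a set of measure $\sim\delta_k$, so the $L^2(\gamma_F)$ norm on the $k$-th annulus may blow up as $\delta_k\to 0$, and your constraints on $\delta_k$ push it toward $0$. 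With infinitely many annuli this needs a genuine argument. The simplest fix is precisely the paper's: cut off first so that the sum is finite. Everything else in your outline matches the paper's proof.
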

\begin{proof}
The proof is a modification of the classical Meyers-Serrin arguments; we refer to 
\cite[Theorem 3.9]{AmbFusPal00} for more details. First of all we can assume that $v_0$ has compact 
support in $F$. Indeed, for every $R>1$ let  $\vartheta_R$ be a cut--off 
function such that $0\leq \vartheta_R \leq 1$,  $|\nabla \vartheta_R(x)|\leq 2$ for every $x$ and
\[
\vartheta_R\equiv 1 \mbox{ on }B_{R-1}(0),\qquad
{\rm supp}(\vartheta_R )\subset B_{R}(0). 
\]
Setting  $v_R=v_0 \vartheta_R$, we have
\[
\limsup_{R\to +\infty} \| v_0-v_R \|_{L^2(\mathscr{O},\gamma_F)}
\leq 
\lim_{R\to +\infty} \| v_0\|_{L^2(\mathscr{O}\setminus B_{R-1}(0),\gamma_F)} =0
\]
and, by the obvious equality 
${\rm div}_F(\vartheta_R \varphi) = \vartheta_R {\rm div}_F\varphi + \langle\nabla\vartheta_R,\varphi\rangle$, 
\[
|D_{\gamma_F} v_R| (\mathscr{O})
\leq
2\int_{\mathscr{O}\cap (B_{R}(0)\setminus B_{R-1}(0))} |v_0|d\gamma_F + 
|D_{\gamma_F}v_0| (\mathscr{O} \cap  B_{R}(0)).
\]
This implies that $v_R$ converges to $v_0$ in variation as $R\to\infty$. Precisely,  for any 
$\varepsilon>0$ there is $R>0$ such that $\| v_R-v_0\|_{L^2(\mathscr{O},\gamma_F)}<\varepsilon$
and
\[
\Big| |D_{\gamma_F}v_0|(\mathscr{O})-|D_{\gamma_F}v_R|(\mathscr{O})\Big|<\varepsilon.
\]
 From now on, we consider $v=v_R$ with this choice of $R$ in place of $v_0$. 
We can consider a sequence of open sets $\mathscr{O}_j$ with compact closure
in $\mathscr{O}$ such that any point of $\mathscr{O}$ belongs to at most four sets 
$\mathscr{O}_j$; a possible choice is  
\[
\mathscr{O}_1 =\{ x\in \mathscr{O}\cap B_2(0): {\rm dist}(x,\partial \mathscr{O})>1/2\}
\]
and for $j\geq 2$
\[
\mathscr{O}_j =\Bigl\{ x\in \mathscr{O}\cap B_{j+1}(0)\setminus \overline{B}_{j-1}(0): 
\frac{1}{j+1}<{\rm dist}(x,\partial \mathscr{O})<\frac{1}{j-1}\Bigr\}.
\]
Let $\{\varphi_j\}_j$ be a partition of unity associated with such a covering, and let $\varrho$ be 
a standard mollifier. For every $j\in \N$ fix $\varepsilon_j<\varepsilon$  such that 
${\rm supp} ((v\varphi_j)*\varrho_{\varepsilon_j})\subset \mathscr{O}_j$ and
\[
\|(v\varphi_j)*\varrho_{\varepsilon_j}- v\varphi_j\|_{L^2({\mathscr O},\gamma_F)}+ \int_\mathscr{O} 
|(v\nabla\varphi_j)*\varrho_{\varepsilon_j}-
v\nabla\varphi_j|
d\gamma_F <\frac{\varepsilon}{2^j} .
\]
The  approximation of $v$ is then defined by
\[
v_\varepsilon =\sum_{j\in \N} (v\varphi_j)*\varrho_{\varepsilon_j}. 
\]
$v_\varepsilon$  is indeed smooth, and
\[
\|v_\varepsilon-v\|_{L^2({\mathscr O},\gamma_F)} 
\leq 
\sum_{j\in \N} \|(v\varphi_j)*\varrho_{\varepsilon_j}-v\varphi_j\|_{L^2({\mathscr O},\gamma_F)}
<\varepsilon.
\]
On the other hand, see \cite[Proposition 3.2]{AmbFusPal00} 
\[
\nabla v_\varepsilon =\sum_{j\in \N} (\varphi_j Dv)*\varrho_{\varepsilon_j}+
\sum_{j\in \N} \left( (v\nabla\varphi_j)*\varrho_{\varepsilon_j}-v\nabla\varphi_j
\right).
\]
Here we have used the fact that $BV(\mathscr{O},\gamma_F)\subset BV_{\rm loc}(\mathscr{O})$, hence 
$Dv$ is a measure with locally finite variation and the convolution above is well defined. From these 
considerations, we then obtain
\begin{align*}
|D_{\gamma_F} v_\varepsilon|(\mathscr{O}) = &
\int_{\mathscr{O}} |\nabla v_\varepsilon(x)|d\gamma_F(x)
\\ 
\leq& \sum_{j\in \N} \int_{\mathscr{O}} 
|v \nabla\varphi_j*\varrho_{\varepsilon_j}(x)-v(x)\nabla\varphi_j(x)| G_d(x)dx
\\
&+\sum_{j\in \N} \int_{\mathscr{O}} dx \int_{B_\varepsilon(x)}
G_d(x)\varphi_j(y) \varrho_{\varepsilon_j}(x-y) d|Dv|(y) 
\\
\leq& \varepsilon + \sum_{j\in \N} \int_{\mathscr{O}} dx \int_{B_\varepsilon(x)}
\frac{G_d(x)}{G_d(y)}\varphi_j(y) \varrho_{\varepsilon_j}(x-y) G_d(y)d|Dv|(y) 
\\
\leq & \varepsilon +\sum_{j\in \N}  e^{\varepsilon_j R + \varepsilon_j^2/2} 
\int_{\mathscr{O}}\varphi_j(y)d|D_{\gamma_F}v|(y) 
\leq \varepsilon+e^{\varepsilon R + \varepsilon^2/2} |D_{\gamma_F}v|(\mathscr{O}),
\end{align*}
where we have used the fact that the support of $v$ is contained in  $B_R(0)$ and for 
$y\in B_{\varepsilon_j}(x)$ 
\[
\frac{G_d(x)}{G_d(y)}=e^{\frac{|y|^2-|x|^2}{2}}\leq e^{\varepsilon_j R + \varepsilon_j^2/2}.
\]
\end{proof}

\subsection{Abstract Wiener spaces} 
\label{wiener}

We consider  an infinite dimensional separable Banach space   $X$ (whose topological 
dual we denote by $X^*$), endowed with the Borel $\sigma$-algebra ${\mathscr B}(X)$ and a centred and 
non degenerate Gaussian measure $\gamma$ with nondegenerate covariance operator $Q\in \mathcal{L}(X^*,X)$ 
uniquely determined by the relation
\[
y^*(Qx^*) =\int_X x^*(x) y^*(x)d\gamma(x)
\qquad\forall x^*,\,y^*\in X^*.
\]
If we consider the operator $R:L^2(X,\gamma)\to X$ given by the Bochner integral
\[
R \varphi=\int_X x \varphi (x)\,d\gamma(x),\qquad \varphi \in L^2(X,\gamma), 
\]
it is easily seen that its adjoint $R^*:X^*\to L^2(X,\gamma)$ is just the embedding operator,  
$(R^*x^*)(x)= x^*(x)$, $x\in X$, and the equality $Q=RR^*$ follows. 

The Cameron-Martin space $H$ is given by $R({\mathcal H})$, where ${\mathcal H}$ is the closure of $X^*$
in $L^2(X,\gamma)$. It coincides with the set of all $h\in X$ such that there exists 
$\hat{h}\in {\mathcal H}$ for which
\[
\int_X x^*(x) \hat{h}(x) \,d\gamma(x) = x^*(h), \quad x^*\in X^*. 
\]
In this case, we have $R(\hat{h}) = h$,  and $R_{|\mathcal H} :{\mathcal H}\to H$ is an isometry if we endow $H$ 
with the norm  $|\cdot|_H$ associated with the inner product 
$[h, k]_H = \langle \hat{h}, \hat{k}\rangle_{L^2(X, \gamma)}$. 
The space  $Q(X^*)$ is dense in $H$, and  $H$ is continuously and densely embedded in $X$.  

The symbol ${\mathscr F}C_b^1(X)$ denotes the space of bounded continuously differentiable
cylindrical functions with bounded derivatives, that is, $u\in {\mathscr F}C_b^1(X)$ if
\[
u(x)=\varphi (x_1^*(x),\ldots,x_m^*(x))
\]
for some $\varphi\in C_b^1(\R^m)$ and $x_1^*,\ldots,x_m^*\in X^*$. 

We fix once and for all an orthonormal basis $(h_j)_{j\in\N}$ of $H$, with $h_j=Qh_j^,\ h_j \in X^*$ 
(such a basis exists, see \cite[Corollary 2.10.10]{boga}).  We denote by $\pi_m: X\to$ span$\{h_1, \ldots, h_m\}$ the projection 
$\pi_mx= \sum_{j=1}^m \hat{h}_j(x)h_j$ and by $X_m$, $X_m^\perp$ the range and the kernel of $\pi_m$, 
respectively. Note that the restriction of $\pi_m$ to $H$ is the orthogonal projection on the linear span of $h_1, \ldots, h_m$. 

$\pi_m$ induces the canonical factorisation $\gamma = \gamma_m \otimes \gamma_m^\perp$, 
where $\gamma_m = \gamma \circ \pi_m^{-1}$ and $\gamma^\perp_m = \gamma \circ (I-\pi_m)^{-1}$ are 
the pull--back measures on $X_m$ and  $X_m^\perp$. 

For every function $u\in L^1(X,\gamma)$ we define its canonical cylindrical approximations ${\mathbb{E}}_m u$ 
by 
\begin{equation}\label{conditionalexp}
{\mathbb E}_m u(x) =\int_{X}u(\pi_mx+(I-\pi_m)y)d\gamma(y)
=\int_{X^\perp_m}u(\pi_mx+y')d\gamma^\perp_m(y'),
\end{equation}
Then, $\lim_{m\to \infty}{\mathbb{E}}_m u =  u$ in $L^1(X,\gamma)$ and $\gamma$-a.e. (see e.g. \cite[Corollary 3.5.2]{boga}). 
Moreover ${\mathbb{E}}_m u$ is invariant under translations along all the vectors in $X^\perp_m$, 
hence ${\mathbb E}_mu(x)=v(\pi_mx)$ for some function $v$. 
% With some abuse of notation, we   write ${\mathbb E}_mu(x_m)$ instead of ${\mathbb E}_mu(x)$. 

\noindent
Let us recall the notation for the 
partial derivative along $h\in H$ and for  its formal adjoint. For $f\in C^1_b(X)$ we set 
\[
\partial_h f(x)=\lim_{t\to 0}\frac{f(x+th)-f(x)}{t},\qquad
\partial^*_hf(x)=\partial_hf(x)-f(x)\hat{h}(x)
\]
where $h=R\hat{h}\in H$ with $\hat{h}\in{\mathcal H}$.
The gradient along $H$, $\nabla_H f:X\to H$ of $f$ is defined as
\[
\nabla_Hf(x)=\sum_{j\in \N}\partial_{h_j} f(x)h_j
\]
and it is the unique element $y\in H$ such that, for every $h\in H$, $\partial_h f(x)=[y,h]_H$.
Notice that if $f(x)=g(\pi_mx)$ with $g\in C^1({\R}^m)$, then
\[
\partial _h f(x)=\langle\nabla g(\pi_mx),\pi_m h\rangle_{{\mathbb R}^m}.
\]
The operator $\partial_h^*$ defined by 
 $\partial_h^*\varphi = - \partial_h\varphi  + \hat{h}\varphi$
is (up to a change of sign) the formal adjoint
of $\partial_h$ with respect to $L^2(X,\gamma)$, namely
\[
\int_X \varphi\, \partial_h f\, d\gamma=-\int_X f\partial_h^*\varphi\,  d\gamma
\qquad\forall\varphi,\,f\in {\mathscr F} C^1_b(X).
\]
We define the space ${\mathscr F}C^1_b(X,H)$ of cylindrical $H$-valued
functions as the vector space spanned by functions $\varphi h$, where
$\varphi$ runs in ${\mathscr F} C^1_b(X)$ and $h$ in $H$. With this
notation, the divergence operator is defined for $\varphi\in {\mathscr F}C^1_b(X,H)$ as
\[
{\rm div}_\gamma \varphi =\sum_{j\in \N} \partial^*_j [\varphi ,h_j]_H,
\]
and we have the integration by parts formula 
\[
\int_X [\nabla_Hf,\varphi]_Hd\gamma = - \int_X f {\rm div}_\gamma\varphi\ d\gamma 
\qquad f\in {\mathscr F}C^1_b(X),\;\varphi\in {\mathscr F}C^1_b(X;H).
\]
If we fix a finite dimensional space $F\subset Q(X^*)\subset H$ with ${\rm dim}\ F=d$, we  identify $F$
with $\R^d$ and we denote by ${\rm div}_F$ the divergence on $F$ defined according to \eqref{divF} 
with respect to any orthonormal basis $\{h_1,\ldots,h_d\}$ of $F$. Moreover, since $F\subset H$, 
there is an orthogonal projection of $H$ onto $F$. According to Theorem 2.10.11 in 
\cite{boga} there is a unique (up to equivalence) measurable projection $\pi_F:X\to F$ which 
extends it. 

We denote by $\mathscr{M}(X,H)$ the space of all $H$-valued  measures $\mu$ with finite total 
variation on $\B(X)$. The total variation measure $|\mu|$ of  $\mu$ is defined by 
\[
|\mu|(B) = \sup \bigg\{ \sum_{j=1}^{\infty} |\mu(B_j)|_H:\; B = \bigcup_{j\in \N} B_h \bigg\}, 
\]
where  $B_j\in \B(X)$ for every $j$ and $B_j\cap B_i=\emptyset$ for $j\neq i$. Moreover, using the 
polar decomposition $\mu=\sigma|\mu|$, the total 
variation of $\mu$ can be obtained as
\begin{equation}\label{totalvar}
|\mu|(\Omega)=\sup\Bigl\{\int_\Omega [\sigma,\varphi]_Hd|\mu|:\ \varphi\in C_b(\Omega, H),\ 
\|\varphi\|_\infty\leq 1\Bigr\}.
\end{equation}
Indeed, in the real valued case this is a direct consequence of  the isometry between the space 
of real measures on and open set $\Omega$ and the dual space of $C_b(\Omega)$, see \cite[Section IV.6]{DS}. 
The finite dimensional case follows immediately because a vector-valued measure is just 
an $n$-tuple of real-valued measures. If $\mu$ has infinitely many components $\mu_j=[\mu,h_j]_H$, 
then $\mu\in\mathscr{M}(X,H)$ if and only if 
\[
\sup_m |(\mu_1,\ldots,\mu_m)|(X) < \infty.
\]
In fact, setting $\lambda=\sup_m |(\mu_1,\ldots,\mu_m)|$, the inequality $\lambda\leq|\mu|$ is obvious. 
Conversely, since $\mu_j\ll\lambda$ for every $j\in\N$, there is a sequence of $\lambda$-measurable 
functions $(f_j)$ such that $\mu_j=f_j\lambda$ and $\sum_{j=1}^m|f_j|^2\leq 1\ \lambda$-a.e. for every 
$m\in\N$, whence $\|(f_j(x))\|_{\ell^2}\leq 1\ \lambda$-a.e., 
$\sum_{j=1}^\infty f_jh_j\in L^1(X,\lambda;H)$ and 
$\mu=\sum_{j=1}^\infty f_jh_j\lambda \in {\mathscr{M}}(X,H)$. 

\begin{Definition}\label{defBVWiener}
Let $u\in L^2(X,\gamma)$. We say that $u$ has bounded variation in $X$ and we write $u\in BV(X,\gamma)$ 
if there exists $\mu \in \mathscr{M}(X,H)$ such that for any $\varphi\in {\mathscr F}C^1_b(X)$ we have
\begin{equation}\label{BVintbyparts}
\int_X u(x) \partial^*_j\varphi(x) d\gamma(x)=-\int_X \varphi(x) d\mu_j(x)\qquad \forall j\in \N,
\end{equation}
where $\mu_j=[h_j,\mu]_H$. In this case we set $D_{\gamma}u = \mu$. 
\end{Definition}

Even though in this paper we deal with   $BV$ functions defined 
in the whole space, it is interesting to point out that an intrinsic definition of $BV(\Omega,\gamma)$ 
is possible, using a suitable class of test functions. By \eqref{defvar}, we notice that in finite 
dimension the natural class of test functions is that of boundedly supported smooth functions. In 
infinite dimensions compactly supported smooth functions are not adequate and for $H$-valued measures 
the following result holds. 

\begin{Lemma}\label{lemmaSupLip0}
Let $\Omega\subset X$ be open and let $\mu\in {\mathscr M}(\Omega,H)$ be an $H$-valued Radon measure. 
Then, denoting by $|\mu|$ the total variation measure and using the polar decomposition $\mu=\sigma|\mu|$ 
we have  
\[
|\mu|(\Omega) = \sup\Bigl\{\int_\Omega [\sigma,\varphi]_Hd|\mu|:\ \varphi\in {\rm Lip}_0(\Omega, H),\ 
\|\varphi\|_\infty\leq 1\Bigr\},
\]
where ${\rm Lip}_0(\Omega,H)$ denotes the space of $H$-valued functions defined on $X$, Lipschitz 
continuous with respect to the $X$-norm and vanishing in $X\setminus\Omega$.
\end{Lemma}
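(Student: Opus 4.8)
The plan is as follows. The inequality ``$\le$'' in the statement is immediate: every $\varphi\in{\rm Lip}_0(\Omega,H)$ with $\|\varphi\|_\infty\le1$ restricts to an element of $C_b(\Omega,H)$ with the same sup-norm bound, so the supremum over ${\rm Lip}_0(\Omega,H)$ cannot exceed the supremum over $C_b(\Omega,H)$, which equals $|\mu|(\Omega)$ by \eqref{totalvar}. For the reverse inequality I would use that $\sigma\in L^\infty(\Omega,|\mu|;H)$ is strongly $|\mu|$-measurable with $|\sigma|_H=1$ $|\mu|$-a.e.\ (the usual polar decomposition, valid since $H$ is Hilbert and $|\mu|$ is finite). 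By the Cauchy--Schwarz inequality, for every $\varphi\in{\rm Lip}_0(\Omega,H)$ with $\|\varphi\|_\infty\le1$,
\[
\int_\Omega[\sigma,\varphi]_H\,d|\mu|\;\ge\;\int_\Omega|\sigma|_H^2\,d|\mu|-\int_\Omega|\sigma|_H\,|\sigma-\varphi|_H\,d|\mu|\;=\;|\mu|(\Omega)-\int_\Omega|\sigma-\varphi|_H\,d|\mu| ,
\]
so it is enough to approximate $\sigma$ in $L^1(\Omega,|\mu|;H)$ by functions of ${\rm Lip}_0(\Omega,H)$ of sup-norm $\le1$.

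Since $\sigma$ is Bochner integrable, it is approximated in $L^1(\Omega,|\mu|;H)$ by simple functions $s=\sum_{i=1}^N a_i\one_{B_i}$ with $a_i\in H$ and $B_i\in\B(\Omega)$; multiplying by the $a_i$ and summing, it suffices to approximate each $\one_{B_i}$ in $L^1(\Omega,|\mu|)$ by scalar functions that are Lipschitz for $\|\cdot\|_X$ and vanish on $X\setminus\Omega$. Fix $B=B_i$ and $\delta>0$. By inner regularity of the finite Radon measure $|\mu|$ there is a compact set $K\subset B$ with $|\mu|(B\setminus K)<\delta$, and since $K$ is compact and contained in the open set $\Omega$, one has $r:=\dist_X(K,X\setminus\Omega)>0$. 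For $n>1/r$ put
\[
\varphi_n(x)=\max\bigl\{0,\,1-n\,\dist_X(x,K)\bigr\},\qquad x\in X .
\]
Then $\varphi_n$ is $n$-Lipschitz for $\|\cdot\|_X$, takes values in $[0,1]$, equals $1$ on $K$, and vanishes wherever $\dist_X(\cdot,K)\ge1/n$, in particular on $X\setminus\Omega$; hence $\varphi_n\in{\rm Lip}_0(\Omega)$. Since $\varphi_n\to\one_K$ pointwise, dominated convergence gives $\|\varphi_n-\one_B\|_{L^1(\Omega,|\mu|)}<\delta$ for $n$ large. Combining, we obtain $\psi\in{\rm Lip}_0(\Omega,H)$ with $\|\psi-\sigma\|_{L^1(\Omega,|\mu|;H)}$ arbitrarily small.

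It remains to enforce $\|\psi\|_\infty\le1$. Let $P\colon H\to H$ be the metric projection onto the closed unit ball, $Pv=v/\max\{1,|v|_H\}$; it is $1$-Lipschitz, satisfies $Pv=0$ iff $v=0$, and $|Pv-w|_H\le|v-w|_H$ whenever $|w|_H\le1$. Then $\widetilde\psi:=P\circ\psi$ is again Lipschitz for $\|\cdot\|_X$, vanishes on $X\setminus\Omega$, and, since $|\sigma|_H=1$ $|\mu|$-a.e., satisfies $|\widetilde\psi-\sigma|_H\le|\psi-\sigma|_H$ pointwise; hence $\widetilde\psi\in{\rm Lip}_0(\Omega,H)$, $\|\widetilde\psi\|_\infty\le1$, and $\|\widetilde\psi-\sigma\|_{L^1(\Omega,|\mu|;H)}$ can be made as small as we like. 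Inserting $\widetilde\psi$ into the displayed inequality of the first paragraph yields $\int_\Omega[\sigma,\widetilde\psi]_H\,d|\mu|>|\mu|(\Omega)-\varepsilon$ for any prescribed $\varepsilon>0$, which proves ``$\ge$'' and hence the equality.

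I expect the main obstacle to be exactly the step in the second paragraph: the test functions must be Lipschitz for the $X$-norm and vanish \emph{identically} outside $\Omega$, not merely in the limit. A naive Urysohn function separating a closed set $C\subset B$ from $X\setminus U$ (with $B\subset U\subset\Omega$, $U$ open) need not be Lipschitz, because $\dist_X(C,X\setminus U)$ may vanish when $C$ is unbounded; passing to compact sets $K$, for which $\dist_X(K,X\setminus\Omega)>0$ automatically in a metric space, is what makes the tent functions $\varphi_n$ have finite Lipschitz constant and support inside $\Omega$, and this is also where the Radon (tightness) hypothesis on $\mu$ is used.
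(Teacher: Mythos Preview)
Your proof is correct and shares the paper's essential scaffolding: inner regularity to pass to a compact $K\subset\Omega$ with $\dist_X(K,X\setminus\Omega)>0$, a Lipschitz cutoff built from $\dist_X(\cdot,K)$, and the radial retraction onto the closed unit ball of $H$ to restore the sup-norm constraint. The difference lies in the approximation step. The paper starts from a near-optimal test function $\varphi_\varepsilon\in C_b(\Omega,H)$ given by \eqref{totalvar}, projects $\sigma$ onto a finite-dimensional subspace of $H$ so that only finitely many components of $\varphi_\varepsilon$ matter, and then invokes Stone--Weierstrass to replace those components by cylindrical $C^1_b$ functions (hence $X$-Lipschitz) uniformly on $K$, before multiplying by the cutoff. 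You instead reduce the problem, via the inequality $\int[\sigma,\varphi]_H\,d|\mu|\ge|\mu|(\Omega)-\|\sigma-\varphi\|_{L^1(|\mu|;H)}$, to approximating $\sigma$ itself in $L^1(\Omega,|\mu|;H)$; this you do by simple functions (standard Bochner theory) and then by tent functions for the scalar indicators. Your route avoids both the finite-dimensional reduction and Stone--Weierstrass, and is somewhat more direct; the paper's route, on the other hand, produces approximants that are not merely ${\rm Lip}_0$ but of the more structured form $f\cdot(G\circ g)$ with $g\in\mathscr{F}C^1_b(X,H)$, which can be convenient if one later wants test functions with additional smoothness or a cylindrical structure.
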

\begin{proof}
We recall that in our framework all Borel measures on $X$ are Radon measures. Therefore, 
for every $\varepsilon>0$ there are a function $\varphi_\varepsilon\in C_b(\Omega,H)$ with 
$\|\varphi\|_\infty\leq 1$ and a compact set $K\subset\Omega$ with 
$|\mu|(\Omega\setminus K)<\varepsilon$ such that 
\[
|\mu|(\Omega) \leq \int_\Omega [\sigma,\varphi_\varepsilon]_H\, d|\mu| + \varepsilon 
\leq \int_K [\sigma,\varphi_\varepsilon]_H\, d|\mu| +2\varepsilon. 
\]
Let us now approximate $\sigma$: there is $\sigma_\varepsilon\in C(K,H)$ with finite dimensional 
range (just write $\sigma=\sum_j[\sigma,h_j]_Hh_j$ and take a suitable finite dimensional projection) 
such that  
\[
\|\sigma-\sigma_\varepsilon\|_{L^1(K,|\mu|)}<\varepsilon\qquad \text{and}\qquad
|\mu|(\Omega) \leq \int_K [\sigma_\varepsilon,\varphi_\varepsilon]_H\, d|\mu| + 3\varepsilon .
\]
Notice that, since $\sigma_\varepsilon$ has finite dimensional range, only finitely many 
components of $\varphi_\varepsilon=\sum_j[\varphi_\varepsilon,h_j]_Hh_j$ are involved in the above 
integral. We may therefore argue component by component to show that $\varphi_\varepsilon$ can be 
approximated by ${\rm Lip}_0$ functions uniformly on $K$. To this end, let us first remark that by the
Stone-Weierstrass theorem the class of the restrictions to $K$ of ${\mathscr F}C^1_b(X)$ functions 
is dense in $C_b(K)$, hence there is a function $g_\varepsilon\in {\mathscr F}C_b^1(X,H)$ with 
finite dimensional range such that $\|g_\varepsilon - \varphi_\varepsilon\|_{L^\infty(K)}<\varepsilon$.
Moreover, the function 
\[
f(x)= \Bigl(1-\frac{2}{\delta}{\rm dist}(x,K)\Bigr)^+,\qquad x\in H,
\]
with $\delta={\rm dist}(K,\partial\Omega)$, belongs to ${\rm Lip}_0(\Omega)$, so that, setting 
\[
G(h)=\left\{\begin{array}{ll}
h\qquad &\text{if }|h|_H\leq 1 
\\
\frac{h}{|h|_H} &\text{if }|h|_H >1
\end{array} 
\right.
,\qquad h\in H,
\]
we have that $\psi_\varepsilon(x)=f(x)(G\circ g_\varepsilon)(x)\in {\rm Lip}_0(\Omega,H)$, 
$\|\psi_\varepsilon\|_{L^\infty(K,|\mu|)}\leq 1$ and 
$\|\varphi_\varepsilon-\psi_\varepsilon\|_{L^\infty(K)}\leq 2\varepsilon$, whence 
\begin{align*}
|\mu|(\Omega) &\leq \int_K [\sigma_\varepsilon,\psi_\varepsilon]_H\, d|\mu| + 5\varepsilon 
\leq \int_\Omega [\sigma,\psi_\varepsilon]_H\, d|\mu| + 6\varepsilon
\\
&\leq \sup\Bigl\{\int_\Omega [\sigma,\psi]_Hd|\mu|:\ \psi\in {\rm Lip}_0(\Omega, H),\ 
\|\psi\|_\infty\leq 1\Bigr\}+6\varepsilon . 
\end{align*}
By the arbitrariness of $\varepsilon$, the proof is complete.
\end{proof}

In the next lemma we extend the integration by parts formula \eqref{BVintbyparts} with 
$u\in BV(X,\gamma)$ to ${\rm Lip}_0(\Omega,H)$ functions. 

\begin{Lemma}\label{intbypartsinLip}
For every $\varphi\in {\rm Lip}_0(\Omega,H)$ and $u\in BV(X,\gamma)$ the following equality holds:
\begin{equation}\label{intbypartsinLip1}
\int_{\Omega}u\, {\rm div}_\gamma\varphi\, d\gamma = - \int_{\Omega} [\varphi,D_\gamma u]_H .
\end{equation}
\end{Lemma}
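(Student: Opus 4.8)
The plan is to reduce the infinite-dimensional identity to the known cylindrical integration-by-parts formula \eqref{BVintbyparts} by two successive approximations: first replacing a general $\varphi\in{\rm Lip}_0(\Omega,H)$ by cylindrical $H$-valued test functions, and then exploiting density of $\mathscr{F}C^1_b(X)$ in the right classes so that both sides pass to the limit. Note that \eqref{BVintbyparts} says exactly that for $\varphi\in\mathscr{F}C^1_b(X)$ and $h_j$ a fixed basis vector, $\int_X u\,\partial^*_j\varphi\,d\gamma=-\int_X\varphi\,d\mu_j$; summing against finitely many basis directions, this already gives \eqref{intbypartsinLip1} for $\varphi\in\mathscr{F}C^1_b(X,H)$ with $\varphi$ supported in $\Omega$. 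So the real content is to approximate a Lipschitz, $X$-vanishing-outside-$\Omega$ function by such cylindrical functions while controlling both $\int u\,{\rm div}_\gamma\varphi$ and $\int[\varphi,D_\gamma u]_H$.

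First I would handle the divergence side. For $\varphi\in{\rm Lip}_0(\Omega,H)$ one does not a priori have $\mathrm{div}_\gamma\varphi$ defined pointwise, so I would work instead with the finite-dimensional conditional expectations: apply $\mathbb{E}_m$ (as in \eqref{conditionalexp}) and the finite-dimensional projection $\pi_m$ to reduce to functions on $X_m\cong\R^m$, where ${\rm div}_\gamma$ becomes the Gaussian divergence ${\rm div}_{X_m}$ of \eqref{divF}. On $\R^m$ a Lipschitz compactly-bounded vector field can be mollified: $\varphi^{(m)}=\mathbb{E}_m\varphi$ composed with a standard mollifier gives $\varphi_\varepsilon^{(m)}\in\mathscr{F}C^1_b(X,H)$ supported in $\Omega$ (using that $\varphi$ vanishes outside $\Omega$ and that $\Omega$ is open, so a slightly shrunk support stays inside; here the uniform continuity of $\varphi$ on bounded sets and boundedness of $H$-norm are used). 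For these, \eqref{BVintbyparts} applies. Then I let $\varepsilon\to0$ and $m\to\infty$: on the left, $\mathrm{div}_\gamma\varphi_\varepsilon^{(m)}\to\mathrm{div}_\gamma\varphi$ weakly against the finite measure $u\,d\gamma$ (since $u\in L^2\subset L^1$), using that the Gaussian divergence of a mollified/projected Lipschitz field is uniformly bounded in $L^2(X,\gamma)$ — this is the standard estimate, the mollification adds only the derivative of the mollifier paired against a bounded field, which is controlled by the Lipschitz constant, plus the $\hat h_j$-terms which are in $L^2$. On the right, $\varphi_\varepsilon^{(m)}\to\varphi$ uniformly on $X$-bounded sets and boundedly, and $D_\gamma u=\mu$ is a finite $H$-valued measure, so $\int[\varphi_\varepsilon^{(m)},D_\gamma u]_H\to\int[\varphi,D_\gamma u]_H$ by dominated convergence.

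The main obstacle I anticipate is the convergence $\int_X u\,{\rm div}_\gamma\varphi_\varepsilon^{(m)}\,d\gamma\to\int_\Omega u\,{\rm div}_\gamma\varphi\,d\gamma$: one must make sense of the target ${\rm div}_\gamma\varphi$ for a merely Lipschitz $\varphi$, and show the approximation is the right one. The clean way is to \emph{define} the left-hand side of \eqref{intbypartsinLip1} for $\varphi\in{\rm Lip}_0(\Omega,H)$ as $\lim\int u\,{\rm div}_\gamma\varphi_\varepsilon^{(m)}\,d\gamma$ after checking independence of the approximating sequence — but since the paper presumably already has a notion of ${\rm div}_\gamma$ on Lipschitz fields (via the adjoint relation, or as an $L^2$ function by Meyers-Serrin-type density, cf.\ Lemma \ref{AnzGiaW12} in the cylindrical slices), I would instead cite/use that ${\rm div}_\gamma\varphi\in L^2(X,\gamma)$ together with the uniform $L^2$-bound $\|{\rm div}_\gamma\varphi_\varepsilon^{(m)}\|_{L^2}\le C$ and a.e.\ (or $L^2$-weak) convergence to ${\rm div}_\gamma\varphi$, giving the desired limit against $u\in L^2$. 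The support condition is what keeps all integrals over $\Omega$ rather than $X$. Once both sides are shown to pass to the limit, \eqref{intbypartsinLip1} follows from \eqref{BVintbyparts} for each $\varphi_\varepsilon^{(m)}$, summed over the finitely many active components $[\varphi_\varepsilon^{(m)},h_j]_H$, $j\le m$.
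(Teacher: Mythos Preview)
Your approach is different from the paper's and, while the underlying strategy can be made to work, the write-up contains a genuine error and glosses over the main technical point.

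The paper does \emph{not} approximate $\varphi$ at all. Instead it fixes one basis direction $h_j$ at a time, factorises $x=y+th_j$ and $\gamma=\gamma_1\otimes\gamma_{h_j^\perp}^\perp$, and reduces the $j$-th component of \eqref{intbypartsinLip1} to the one-dimensional integration-by-parts formula
\[
\int_{\R} u_y(t)\,\partial_t^*(\varphi_j)_y(t)\,d\gamma_1(t)=-\int_{\R}(\varphi_j)_y(t)\,dD_{\gamma_1}u_y(t),
\]
valid for the Lipschitz section $(\varphi_j)_y$ against the one-dimensional $BV$ section $u_y$. The slicing representation $[D_\gamma u,h_j]=\int D_{\gamma_1}u_y\,d\gamma_{h_j^\perp}^\perp$ from \cite{AMMP} then reassembles the identity. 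No smoothing of $\varphi$, no divergence of a Lipschitz field, no $W^{1,2}$-convergence is needed.

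Your plan has a concrete mistake: you claim that $\varphi^{(m)}=\mathbb{E}_m\varphi$ (and its mollification) is supported in $\Omega$. This is false in general: $\mathbb{E}_m\varphi$ depends only on $\pi_m x$, so its support is a cylinder $\pi_m^{-1}(S)$, which need not lie in $\Omega$ unless $\Omega$ is itself cylindrical over $X_m$. Fortunately this is also unnecessary: since $\varphi$ vanishes on $X\setminus\Omega$, the identity \eqref{intbypartsinLip1} is equivalent to the same identity with $\Omega$ replaced by $X$, and \eqref{BVintbyparts} is already stated on all of $X$; so you should drop the support requirement on the approximants and work globally.

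Once you do that, your argument still hinges on two facts you invoke without proof: that ${\rm div}_\gamma\varphi\in L^2(X,\gamma)$ for $\varphi\in{\rm Lip}_0(\Omega,H)$, and that ${\rm div}_\gamma\varphi_\varepsilon^{(m)}\to{\rm div}_\gamma\varphi$ in (weak) $L^2$. These are true---they follow from the $L^2$-boundedness of the Gaussian divergence on $W^{1,2}(X,\gamma;H)$---but they are not established in the paper, so you would have to supply or cite them. The paper's slicing proof avoids this machinery entirely and is considerably shorter.
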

\begin{proof} Let us show \eqref{intbypartsinLip1} arguing component by component. Fix $h_j$, an 
element of the given orthonormal basis in $H$,   and 
consider   the  projection $\pi_j:H\to {\rm span}\,h_j$, $\pi_j(x) = \hat{h}_j(x)h_j$. Then write $x=y+th_j$, $u_y(t)=u(y+th_j)$. Setting $X_j^\perp=(I-\pi_j)(X)$ 
and $\varphi_j=[\varphi,h_j]_H$,   we have
\begin{align*}
\int_X u\partial^*_j \varphi_j\, d\gamma &= 
\int_{X_j^\perp}d\gamma^\perp_{h_j^\perp}\int_{\R}u_y(t)\partial^*_t(\varphi_{j})_y(t)\, d\gamma_1(t)
\\
&=-\int_{X_j^\perp}d\gamma^\perp_{h_j^\perp}\int_{\R}(\varphi_j)_y(t)dD_{\gamma_1}u_y(t)
=-\int_X \varphi_j\, d[D_\gamma u,h_j] ,
\end{align*}
where we have used the notation $\gamma=\gamma_1\oplus\gamma^\perp_{h_j^\perp}$ for the 
factorization of $\gamma$ induced by the decomposition of $X$ into $\pi_j(X)\oplus X_j^\perp$ 
and in the second line for any $y\in X_j^\perp$ the integral on $\mathbb R$ is with respect to the 
measure $D_{\gamma_1}u_y$, the measure derivative of the section $u_y$ of $u$, see \cite{AMMP}. 
\end{proof}

An easy but useful consequence is the following lower semicontinuity property of the total 
variation, see also \cite[Proposition 2.5]{AmbFig2} for a different proof. 

\begin{Corollary}\label{lsc}
Let $u\in BV(X,\gamma)$ and let $\Omega\subset X$ be any open set such that 
$u_{|\Omega}\in L^2(\Omega,\gamma)$. If a sequence $(u_n)_n$ converges to $  u$ 
in $L^2(\Omega,\gamma)$, then 
\[
|D_\gamma u|(\Omega) \leq \liminf_{n\to \infty} |D_\gamma u_n|(\Omega).
\]
\end{Corollary}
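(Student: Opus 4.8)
The plan is to dualise: rewrite $|D_\gamma u|(\Omega)$ as a supremum of integrals of the form $-\int_\Omega u\,{\rm div}_\gamma\varphi\,d\gamma$ over a class of test fields against which the convergence $u_n\to u$ in $L^2(\Omega,\gamma)$ can be passed, and then let $n\to\infty$ for each fixed test field. Since $|D_\gamma u_n|(\Omega)$ must be defined, each $u_n$ belongs to $BV(X,\gamma)$ (and in particular to $L^2(X,\gamma)$); otherwise the corresponding term in the $\liminf$ is $+\infty$ and there is nothing to prove.

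First I would use Lemma \ref{lemmaSupLip0} applied to $\mu=D_\gamma u$ restricted to $\Omega$, together with the integration by parts formula of Lemma \ref{intbypartsinLip}, to get
\[
|D_\gamma u|(\Omega)=\sup\Bigl\{-\int_\Omega u\,{\rm div}_\gamma\varphi\,d\gamma:\ \varphi\in{\rm Lip}_0(\Omega,H),\ \|\varphi\|_\infty\le1\Bigr\}.
\]
Inspecting the construction in the proof of Lemma \ref{lemmaSupLip0}, the near-optimal test fields produced there have range contained in a finite-dimensional subspace of $H$; hence it suffices to take the supremum over those $\varphi=\sum_{j=1}^{d}\varphi_j h_j\in{\rm Lip}_0(\Omega,H)$ with $\|\varphi\|_\infty\le1$. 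For such $\varphi$ the divergence ${\rm div}_\gamma\varphi=\sum_{j=1}^{d}\bigl(-\partial_{h_j}\varphi_j+\hat h_j\varphi_j\bigr)$ is a genuine function in $L^2(X,\gamma)$: each $\partial_{h_j}\varphi_j$ is bounded because $\varphi_j$ is Lipschitz along $h_j$, and each $\hat h_j\varphi_j$ is the product of a Gaussian random variable by a bounded function, hence lies in every $L^p(X,\gamma)$.

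Then, for a fixed such $\varphi$, applying Lemma \ref{intbypartsinLip} to each $u_n$ and using the polar decomposition $D_\gamma u_n=\sigma_n|D_\gamma u_n|$ with $|\sigma_n|_H=1$ a.e.,
\[
-\int_\Omega u_n\,{\rm div}_\gamma\varphi\,d\gamma=\int_\Omega[\varphi,\sigma_n]_H\,d|D_\gamma u_n|\le|D_\gamma u_n|(\Omega).
\]
Since $u_n\to u$ in $L^2(\Omega,\gamma)$ and ${\rm div}_\gamma\varphi\in L^2(\Omega,\gamma)$, the left-hand side converges to $-\int_\Omega u\,{\rm div}_\gamma\varphi\,d\gamma$, so that $-\int_\Omega u\,{\rm div}_\gamma\varphi\,d\gamma\le\liminf_{n\to\infty}|D_\gamma u_n|(\Omega)$; taking the supremum over all admissible $\varphi$ yields the claim.

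The one point that is not purely routine is the reduction to test fields with finite-dimensional range, which is what guarantees that ${\rm div}_\gamma\varphi$ is an honest $L^2$ function and hence that the limit can be passed; I expect this to be the only step needing a word of justification, and it is essentially already contained in the proof of Lemma \ref{lemmaSupLip0}. Everything else is the standard duality argument for lower semicontinuity of the total variation.
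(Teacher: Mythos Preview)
Your proof is correct and follows essentially the same duality strategy as the paper: express $|D_\gamma u|(\Omega)$ as a supremum of linear functionals $u\mapsto -\int_\Omega u\,{\rm div}_\gamma\varphi\,d\gamma$ over ${\rm Lip}_0(\Omega,H)$ test fields via Lemmas~\ref{lemmaSupLip0} and~\ref{intbypartsinLip}, then observe that each such functional is $L^2$-continuous, so the supremum is lower semicontinuous. Your extra step of restricting to test fields with finite-dimensional range (extracted from the construction in Lemma~\ref{lemmaSupLip0}) in order to ensure ${\rm div}_\gamma\varphi\in L^2(\Omega,\gamma)$ is a point the paper leaves implicit, but it is a legitimate clarification rather than a different approach.
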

\begin{proof} By Lemmas \ref{lemmaSupLip0} and \ref{intbypartsinLip}, 
\begin{align*}
|D_\gamma u|(\Omega) &= \sup\Bigl\{\int_\Omega [\varphi,D_\gamma u]_H:\ 
\varphi\in {\rm Lip}_0(\Omega, H),\ \|\varphi\|_\infty\leq 1\Bigr\}
\\
&= \sup\Bigl\{\int_{\Omega}u\, {\rm div}_\gamma\varphi\, d\gamma :\ 
\varphi\in {\rm Lip}_0(\Omega, H),\ \|\varphi\|_\infty\leq 1\Bigr\}.
\end{align*}
On the other hand, $\varphi\in {\rm Lip}_0(\Omega,H)$ the functional 
$u\mapsto \int_X u\, {\rm div}_\gamma\varphi\, d\gamma$ is continuous in $L^2(X,\gamma)$ and 
therefore, by Lemma \ref{intbypartsinLip}, the functional $u\mapsto |D_\gamma u|(\Omega)$ is 
lower semicontinuous in $L^2(X,\gamma)$, as it is the supremum of continuous functionals. 
\end{proof}

It is not hard to see that if $u\in BV(X,\gamma)$ then
\[
|D_{\gamma}u|(X) = \sup\Bigl\{ \int_X [\varphi,D_\gamma u]_H :\ 
\varphi\in {\mathscr F}C^1_b(X,H), \;|\varphi(x)|_H\leq 1 \;\forall x\in X\Bigr\}, 
\]
see \cite{AMMP}. A useful consequence of Lemma \ref{lemmaSupLip0} is that  
the canonical cylindrical approximations ${\mathbb{E}}_m u$ defined in \eqref{conditionalexp} 
(which are known to converge to $u$ in variation, i.e.
\[
|D_\gamma u|(X) =\lim_{m\to +\infty} |D_\gamma {\mathbb E}_m u|(X),
\]
see equality (34) in \cite{AMMP}), verify the  
inequality $|D_\gamma{\mathbb E}_mu|(A) \leq |D_\gamma u|(A)$ for all open sets $A$. In fact, 
\begin{align}\label{conditineq}
|D_\gamma{\mathbb E}_mu|(A)&=\sup\Bigl\{\int_A {\mathbb E}_mu\,  {\rm div}_\gamma\varphi\, d\gamma,\ 
\varphi\in {\rm Lip}_0(A,H),\ \|\varphi\|_\infty \leq 1\Bigr\}
\\ \nonumber
&=\sup\Bigl\{\int_A [\varphi,dD{\mathbb E}_mu]_H,\ 
\varphi\in {\rm Lip}_0(A,H),\ \|\varphi\|_\infty \leq 1\Bigr\}
\\ \nonumber
&=\sup\Bigl\{\int_A \langle\pi_m\varphi, dD{\mathbb E}_mu \rangle,\ 
\varphi\in {\rm Lip}_0(A,H),\ \|\varphi\|_\infty \leq 1\Bigr\}
\\ \nonumber 
&=\sup\Bigl\{\int_A \langle\pi_m\varphi, d \left(\pi_m D_\gamma u\right)\rangle,\ 
\varphi\in {\rm Lip}_0(A,H),\ \|\varphi\|_\infty \leq 1\Bigr\}
\\ \nonumber
&\leq\sup\Bigl\{\int_A [\varphi,dD_\gamma u]_H,\ 
\varphi\in {\rm Lip}_0(A,H),\ \|\varphi\|_\infty \leq 1\Bigr\}
\\ \nonumber 
&=\sup\Bigl\{\int_A u \, {\rm div}_\gamma\varphi\, d\gamma,\ 
\varphi\in {\rm Lip}_0(A,H),\ \|\varphi\|_\infty \leq 1\Bigr\}
=|D_\gamma u|(A).
\end{align}
 
\subsection{Sobolev spaces and the Ornstein--Uhlenbeck semigroup on convex domains}
\label{sobolev}

There are several equivalent ways of defining Sobolev spaces on Wiener spaces, see \cite[Section 5.2]{boga}. 
If $X$ is replaced by a domain $\Omega\subset X$, the equivalence of different definitions is not obvious. 
Here we adopt the definition of \cite{CelLunTra}, that works for sublevel sets $\Omega=\{ G<0\}$ of Sobolev 
functions $G\in W^{1,p}(X, \gamma)$ for some $p>1$. Since we are interested in a convex  $\Omega$, we fix 
any $x_0\in \Omega$ and we define the Minkowski function
\[
\m(x): =\inf\left\{ \lambda\geq 0 : x-x_0 \in \lambda (\Omega-x_0) \right\}
\]
which is Lipschitz continuous; then $\Omega =\{ G<0\}$ with $G(x) =\m(x)-1\in W^{1,p}(X, \gamma)$ for 
every $p>1$. By \cite[Lemma 2.2]{CelLunTra}, the operator  
$ {\rm Lip}(\Omega)\to L^2(\Omega, \gamma; H)$ defined by $ u\mapsto \nabla_H \tilde u_{|\Omega}$, 
where $\tilde u$ is any Lipschitz continuous extension of $u$ to the whole $X$, is closable. The space 
$W^{1,2}(\Omega,\gamma)$ is defined as the domain of its closure, still denoted by $\nabla_H$. Therefore, 
it is a Hilbert space for the inner product 
\[
\langle u, v\rangle _{W^{1,2}(\Omega,\gamma)} = 
\int_{\Omega} uv\,d\gamma  +  \int_{\Omega} [\nabla_Hu, \nabla_Hv]_H\, d\gamma 
\]
which induces the graph norm of $\nabla_H$. The associated quadratic form in the gradient, 
\[
\mathscr{E}(u,v)=\int_\Omega \scalH{\nabla_H u(x)}{\nabla_H v(x)} d\gamma(x), \qquad
u,v\in W^{1,2}(\Omega,\gamma), 
\]
is used to define the Ornstein--Uhlenbeck 
operator $L:D(L)\subset L^2(\Omega,\gamma)\to L^2(\Omega,\gamma)$ by setting
\begin{align*}
D(L)=\Bigl\{u\in W^{1,2}(\Omega,\gamma):&\ \exists f\in L^2(\Omega,\gamma) \mbox{ s.t. } 
\\
&\mathscr{E}(u,v)=-\int_\Omega fvd\gamma,\quad \forall v\in W^{1,2}(\Omega,\gamma) \Bigr\},
\end{align*}
and $Lu=f$. The operator $(L,D(L))$ is self-adjoint in $L^2(\Omega ,\gamma)$ and dissipative (namely, 
$\langle Lu, u\rangle_{L^2(\Omega ,\gamma)} \leq 0$ for every $u\in D(L)$), hence it is the 
infinitesimal generator of an analytic contraction semigroup $(T_t)_{t\geq 0}$ in $L^2(\Omega ,\gamma)$. 

For the moment we have considered only real valued functions. In the sequel we use also the 
complexification of $L$ in the space $L^2(\Omega,\gamma; \C)$, which is  the operator associated with  
the sesquilinear form $(u,v)\mapsto \int_\Omega [\nabla_H u, \overline{\nabla_H v}]_H d\gamma$ 
defined for $u$, $v\in W^{1,2}(\Omega,\gamma; \C)$.  The semigroup generated by the complexification 
agrees with $(T_t)_{t\geq 0}$ on real valued functions, and we use its representation formula as a Dunford 
integral along a complex path.

%%%%%%%%%%%%%%%%%%%%%%%%%%%%%%%%%%%%%%%%%%%%%%%%%%%%%%%%%

\section{Proof of Theorem \ref{mainThm}}
\label{3}

%%%%%%%%%%%%%%%%%%%%%%%%%%%%%%%%%%%%%%%%%%%%%%%%%%%%%%%%

The proof of Theorem \ref{mainThm} is divided in several steps and each step is discussed in
a subsection.

\subsection{Monotonicity in finite dimensions} 
\label{finiteDim}

Let ${\mathscr O}$ be a convex open set with smooth boundary in a finite dimensional space $F$, with scalar product $\langle \cdot, \cdot\rangle$ and norm $|\cdot|$. 
We denote by $ \nu^{\mathscr O }(x)$ the exterior unit normal vector at $x\in \partial  {\mathscr O }$. 
Let $(T^F_t)_{t\geq 0}$ be the semigroup generated by the Ornstein-Uhlenbeck operator $L$ defined by 
the Dirichlet form 
\[
\mathscr{E}_{\mathscr O}(u,v)=\int_{\mathscr O} \langle\nabla u,\nabla v\rangle d\gamma_F, \qquad
u,v\in W^{1,2}({\mathscr O},\gamma_F), 
\]
as explained in Subsection \ref{sobolev}. By \cite{DPL},  $D(L)\subset W^{2,2}({\mathscr O}, \gamma_F)$ 
and the elements of $D(L)$ satisfy the Neumann boundary condition $\partial u/\partial \nu^{\mathscr O }=0$ 
at $\partial {\mathscr O}$.  Moreover for every $u\in D(L)$ we have 
$Lu(x) = \Delta u(x) - \langle x, \nabla u(x)\rangle$. Since $L$ is a realization of an elliptic operator 
with smooth coefficients, and the boundary of ${\mathscr O }$ is smooth, 
the function $(t,x)\mapsto T^F_tu_0(x)$ is smooth in $(0, +\infty)\times {\mathscr O}$ for every
$u_0\in L^2({\mathscr O},\gamma_F)$.  

For any $v_0\in L^2({\mathscr O},\gamma_F)$ let us   introduce the function 
${\mathcal F}_{v_0}:(0,+\infty)\to [0,+\infty]$ defined as
\[
{\mathcal F}_{v_0}(t)=\int_\mathscr{O} |\nabla T_t^F v_0(x)| d\gamma_F(x) . 
\]
Then the following result holds.

\begin{Proposition}
\label{mainpropFD}
For each $v_0\in BV(\mathscr{O},\gamma_F)\cap L^2(\mathscr{O},\gamma_F)$   the function 
${\mathcal F}_{v_0}$ is decreasing in $(0,\infty)$. Moreover 
\[
{\mathcal F}_{v_0}(t)\leq |D_{\gamma_F}v_0|(\mathscr{O}),\qquad \forall t>0
\]
and
\[
\lim_{t\to 0} {\mathcal F}_{v_0}(t)= |D_{\gamma_F}v_0|(\mathscr{O}).
\]
\end{Proposition}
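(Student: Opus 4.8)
The plan is to establish, in order: (a) that for \emph{every} $v_0\in L^2(\mathscr O,\gamma_F)$ the function $\mathcal F_{v_0}$ is decreasing on $(0,\infty)$; (b) the estimate $\mathcal F_{v_0}(t)\le|D_{\gamma_F}v_0|(\mathscr O)$, first for $v_0\in W^{1,2}(\mathscr O,\gamma_F)$ and then, through the approximation of Lemma~\ref{AnzGiaW12}, for $v_0\in BV(\mathscr O,\gamma_F)\cap L^2(\mathscr O,\gamma_F)$; (c) the limit, via lower semicontinuity of the total variation. Monotonicity is the only point where convexity is really used.

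\emph{Monotonicity.} Fix $v_0\in L^2(\mathscr O,\gamma_F)$ and write $u(r,x)=T^F_rv_0(x)$; recall that for $r>0$ one has $u(r,\cdot)\in D(L)$, so $u$ is smooth up to $\partial\mathscr O$ and satisfies the Neumann condition $\langle\nabla u(r,\cdot),\nu^{\mathscr O}\rangle=0$ on $\partial\mathscr O$, and all integrals below converge thanks to the Gaussian weight and the $L^2(\gamma_F)$ bounds on the $x$-derivatives of $L^ku(r,\cdot)$ on compact time intervals (analyticity of $(T^F_t)$). To circumvent the non-differentiability of $|\cdot|$ at $0$ I would work with $g_\delta(r,x)=\sqrt{|\nabla u(r,x)|^2+\delta^2}$, $\delta>0$. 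Differentiating $\partial_ru=\Delta u-\langle x,\nabla u\rangle$ in $x$ gives $\partial_r\nabla u=L\nabla u-\nabla u$ (componentwise), hence $\partial_rg_\delta=g_\delta^{-1}\big(\langle\nabla u,L\nabla u\rangle-|\nabla u|^2\big)$, where $\langle\nabla u,L\nabla u\rangle:=\sum_k\partial_ku\,L\partial_ku$. Differentiating $\int_{\mathscr O}g_\delta(r,\cdot)\,d\gamma_F$ in $r$ (justified by dominated convergence, $|\partial_rg_\delta|\le|L\nabla u|+|\nabla u|$) and integrating the first term by parts via the Gaussian Green formula
\[
\int_{\mathscr O}\phi\,L\psi\,d\gamma_F=-\int_{\mathscr O}\langle\nabla\phi,\nabla\psi\rangle\,d\gamma_F+\int_{\partial\mathscr O}\phi\,\partial_{\nu^{\mathscr O}}\psi\,G_d\,d\mathcal H^{d-1},
\]
applied with $\phi=\partial_ku/g_\delta$ (note $|\phi|\le1$) and $\psi=\partial_ku$ and summed over $k$, splits $\frac{d}{dr}\int_{\mathscr O}g_\delta\,d\gamma_F$ into an interior term, a boundary term, and $-\int_{\mathscr O}g_\delta^{-1}|\nabla u|^2\,d\gamma_F$. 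Using $\sum_k\partial_ku\,\nabla\partial_ku=g_\delta\nabla g_\delta$ and the pointwise Cauchy--Schwarz inequality $|\nabla g_\delta|^2\le|\mathrm{Hess}\,u|^2_{\mathrm{HS}}$, the interior term equals $-\int_{\mathscr O}g_\delta^{-1}\big(|\mathrm{Hess}\,u|^2_{\mathrm{HS}}-|\nabla g_\delta|^2\big)\,d\gamma_F\le0$, while the boundary term equals $\int_{\partial\mathscr O}g_\delta^{-1}\langle\mathrm{Hess}\,u\,\nabla u,\nu^{\mathscr O}\rangle\,G_d\,d\mathcal H^{d-1}$. Here convexity enters: since the Neumann condition forces $\nabla u$ to be tangent to $\partial\mathscr O$, differentiating the identity $\langle\nabla u,\nu^{\mathscr O}\rangle\equiv0$ along $\partial\mathscr O$ in the direction $\nabla u$ gives $\langle\mathrm{Hess}\,u\,\nabla u,\nu^{\mathscr O}\rangle=-\mathrm{II}(\nabla u,\nabla u)\le0$, the second fundamental form of $\partial\mathscr O$ with respect to the exterior normal being nonnegative for a convex set (see the Appendix). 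Hence $\frac{d}{dr}\int_{\mathscr O}g_\delta\,d\gamma_F\le-\int_{\mathscr O}g_\delta^{-1}|\nabla u|^2\,d\gamma_F\le0$; integrating over $[s,t]$, $0<s<t$, and letting $\delta\downarrow0$ (dominated convergence, $\nabla u(s,\cdot)\in L^1(\mathscr O,\gamma_F)$) yields $\mathcal F_{v_0}(t)\le\mathcal F_{v_0}(s)$. Retaining the last term and using $g_\delta^{-1}|\nabla u|^2\ge g_\delta-\delta$, Gronwall even gives $\mathcal F_{v_0}(t)\le e^{-(t-s)}\mathcal F_{v_0}(s)$.

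\emph{The bound and the limit.} If $v_0\in W^{1,2}(\mathscr O,\gamma_F)$, then $T^F_rv_0\to v_0$ in $W^{1,2}(\mathscr O,\gamma_F)$ as $r\to0$, since $\mathscr E_{\mathscr O}(T^F_rv_0-v_0,T^F_rv_0-v_0)=\|(T^F_r-I)(-L)^{1/2}v_0\|^2_{L^2(\mathscr O,\gamma_F)}\to0$; hence $\mathcal F_{v_0}(r)\to\int_{\mathscr O}|\nabla v_0|\,d\gamma_F=|D_{\gamma_F}v_0|(\mathscr O)$ and, by monotonicity, $\mathcal F_{v_0}(t)\le|D_{\gamma_F}v_0|(\mathscr O)$ for every $t>0$. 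For a general $v_0\in BV(\mathscr O,\gamma_F)\cap L^2(\mathscr O,\gamma_F)$, choose via Lemma~\ref{AnzGiaW12} functions $v_n\in W^{1,2}(\mathscr O,\gamma_F)$ with $v_n\to v_0$ in $L^2(\mathscr O,\gamma_F)$ and $\int_{\mathscr O}|\nabla v_n|\,d\gamma_F\to|D_{\gamma_F}v_0|(\mathscr O)$; then $T^F_tv_n\to T^F_tv_0$ in $L^2(\mathscr O,\gamma_F)$, all these functions lie in $W^{1,2}\subset W^{1,1}$, and the lower semicontinuity of $w\mapsto|D_{\gamma_F}w|(\mathscr O)$ with respect to $L^1(\mathscr O,\gamma_F)$-convergence (immediate from \eqref{defvar}), together with the $W^{1,2}$ bound, yields
\begin{align*}
\mathcal F_{v_0}(t)=|D_{\gamma_F}(T^F_tv_0)|(\mathscr O) &\le\liminf_{n\to\infty}|D_{\gamma_F}(T^F_tv_n)|(\mathscr O)\\
&\le\liminf_{n\to\infty}\int_{\mathscr O}|\nabla v_n|\,d\gamma_F=|D_{\gamma_F}v_0|(\mathscr O).
\end{align*}
Finally, $\mathcal F_{v_0}$ being decreasing and bounded above by $|D_{\gamma_F}v_0|(\mathscr O)$, the limit $\ell=\lim_{t\to0}\mathcal F_{v_0}(t)$ exists with $\ell\le|D_{\gamma_F}v_0|(\mathscr O)$; on the other hand $T^F_tv_0\to v_0$ in $L^2(\mathscr O,\gamma_F)$ by strong continuity of the semigroup, so the same lower semicontinuity gives $|D_{\gamma_F}v_0|(\mathscr O)\le\liminf_{t\to0}|D_{\gamma_F}(T^F_tv_0)|(\mathscr O)=\ell$, whence $\ell=|D_{\gamma_F}v_0|(\mathscr O)$.

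\emph{Main difficulty.} The crux is the monotonicity step. Beyond the (routine, but necessary) regularity inputs — elliptic/parabolic smoothing up to the smooth boundary for the Neumann realization of $L$, and differentiation under the integral sign justified by analyticity of $(T^F_t)$ — the essential point is the sign of the boundary integrand, which comes precisely from the interplay between the Neumann condition $\partial_{\nu^{\mathscr O}}u=0$ and the nonnegativity of the second fundamental form of a convex boundary; without convexity this term need not be $\le0$. The regularization by $g_\delta$ and the successive limits $\delta\to0$, $n\to\infty$, $t\to0$ are then standard.
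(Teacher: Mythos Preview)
Your argument is correct and follows essentially the same route as the paper: regularise $|\nabla T^F_t v_0|$ by $\sqrt{|\nabla T^F_t v_0|^2+\delta^2}$, differentiate in $t$, integrate by parts, and control the boundary term via convexity (your second fundamental form is exactly the paper's Jacobian $J\nu^{\mathscr O}$) and the interior term via the pointwise Cauchy--Schwarz inequality for the Hessian. The passage from $W^{1,2}$ to $BV\cap L^2$ via Lemma~\ref{AnzGiaW12} is the same in both proofs, and you are in fact more explicit than the paper in closing the limit $t\to 0$ through lower semicontinuity.

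The one point where the paper is more careful is integrability at infinity: it introduces a spatial cut-off $\vartheta_R$ alongside the $\delta$-regularisation, carries the resulting error term $\frac{2}{R}\int_{t_1}^{t_2}\|D^2 T^F_s v_0\|_{L^2}^2\,ds$, and lets $R\to\infty$ using the $W^{2,2}$ bound on $D(L)$. You assert that the Gaussian Green formula applies directly on the possibly unbounded $\mathscr O$ ``thanks to the Gaussian weight and the $L^2(\gamma_F)$ bounds''; this is true (since $T^F_r v_0\in D(L)\subset W^{2,2}(\mathscr O,\gamma_F)$ for $r>0$, the boundary contribution on $\partial B_R\cap\mathscr O$ does vanish), but it deserves a line of justification. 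As a bonus, by keeping the $-\int g_\delta^{-1}|\nabla u|^2\,d\gamma_F$ term you extract the exponential contraction $\mathcal F_{v_0}(t)\le e^{-(t-s)}\mathcal F_{v_0}(s)$, which the paper does not state.
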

\begin{proof}
In order to avoid integrability problems, we introduce a family of cut--off functions $\vartheta_R$ such that
$0\leq \vartheta_R\leq 1$, $\vartheta_R\equiv 1$ in $B_R(0)$, ${\rm supp}(\vartheta_R)\subset B_{2R}(0)$ and 
$|\nabla \vartheta_R(x)|\leq 2/R$ for every $x$. Analogously, in order to overcome the lack of regularity 
of the function $|\nabla T^F_t v_0(x)|$ at its zeroes, we replace it by $\sqrt{|\nabla T_t^F v_0(x)|^2+1/R}$. 
We then define
\[
{\mathcal F}_{R,v_0}(t)=\int_\mathscr{O} \vartheta_R(x)
\sqrt{|\nabla T_t^F v_0(x)|^2+\frac{1}{R}} \, d\gamma_F(x).
\] 
As a first step, we prove that  ${\mathcal F}_{R,v_0}$ is differentiable. Since $T^F_t$ is analytic, 
$t\mapsto T^F_t v_0$ is differentiable with values in $D(L)$, and 
\[
\partial_t \sqrt{ |\nabla T^F_t v_0(x)|^2+\frac{1}{R}} =
\frac{1}{\sqrt{|\nabla T_t^F v_0(x)|^2+\frac{1}{R}}} \langle\nabla T^F_t v_0(x), \nabla L T^F_tv_0(x)\rangle
\]
so that 
\[
\partial_t \sqrt{  |\nabla T^F_t v_0(x)|^2+\frac{1}{R}} \leq
|\nabla L T^F_tv_0(x)|.
\]
Then we can differentiate under the integral, and recalling that 
$\partial_i\partial^*_j\varphi=\partial^*_j\partial_i\varphi-\varphi\delta_{ij}$ we get  
\allowdisplaybreaks
\begin{align*}
{\mathcal F}'_{R,v_0}(t)=&
\int_\mathscr{O} \frac{\vartheta_R(x)}{\sqrt{|\nabla T_t^F v_0(x)|^2+\frac{1}{R}}}
\langle\nabla T^F_t v_0(x), \nabla L T^F_t v_0(x)\rangle
d\gamma_F(x) \\
=&
\sum_{i,j=1}^d 
\int_\mathscr{O} \frac{\vartheta_R(x)}{\sqrt{|\nabla T_t^F v_0(x)|^2+\frac{1}{R}}}
\partial_i T^F_t v_0(x) \partial_i \partial^*_j \partial_j  T^F_tv_0(x)
d\gamma_F(x) \\
=&
\sum_{i,j=1}^d 
\int_\mathscr{O} \frac{\vartheta_R(x)}{\sqrt{|\nabla T_t^F v_0(x)|^2+\frac{1}{R}}}
\partial_i T^F_t v_0(x) \partial^*_j\partial^2_{ij}  T^F_t v_0(x)
d\gamma_F(x) +\\
&-\int_\mathscr{O} \vartheta_R(x)\frac{|\nabla T^F_t v_0(x)|^2}{\sqrt{|\nabla T_t^F v_0(x)|^2+\frac{1}{R}}}d\gamma_F(x)
\\
=&
\sum_{i,j=1}^d 
\int_{\partial \mathscr{O}} \frac{\vartheta_R(x)}{\sqrt{|\nabla T_t^F v_0(x)|^2+\frac{1}{R}}}
\partial_i T^F_t v_0(x) \partial^2_{ij}  T^F_t v_0 (x) \nu^\mathscr{O}_j(x) 
G_d(x)d\mathscr{H}^{d-1}(x)\\
&-
\sum_{i,j=1}^d 
\int_\mathscr{O} \partial^2_{ij} T^F_t v_0(x) 
\partial_j \left(
\frac{\vartheta_R(x)}{\sqrt{|\nabla T_t^F v_0(x)|^2+\frac{1}{R}}}
\partial_i T^F_t v_0(x)\right) d\gamma_F(x) \\
& -\int_\mathscr{O} \vartheta_R(x)\frac{|\nabla T^F_t v_0(x)|^2}{\sqrt{|\nabla T_t^F v_0(x)|^2+\frac{1}{R}}}
d\gamma_F(x)\\
=&
\int_{\partial \mathscr{O}} \frac{\vartheta_R(x)}{\sqrt{|\nabla T_t^F v_0(x)|^2+\frac{1}{R}}}
\langle D^2 T^F_tv_0(x) \nu^\mathscr{O}(x), \nabla T^F_tv_0(x)\rangle  
G_d(x)d\mathscr{H}^{d-1}(x)
\\
&
-\sum_{i,j=1}^d 
\int_\mathscr{O}
\frac{\vartheta_R(x)}{\sqrt{|\nabla T_t^F v_0(x)|^2+\frac{1}{R}}} \Big[
(\partial^2_{ij} T^F_t v_0(x))^2 +
\\
&-
\frac{1}{|\nabla T_t^F v_0(x)|^2+\frac{1}{R}} \partial^2_{kj} T^F_t v_0(x) 
\partial_k T^F_t v_0(x)\partial_i T^F_t v_0 (x)\partial^2_{ij} T^F_t v_0(x) 
\Big]d\gamma_F(x)+
\\
&
-\int_\mathscr{O} \frac{1}{\sqrt{|\nabla T_t^F v_0(x)|^2+\frac{1}{R}}}
\langle D^2 T_t^F v_0(x) \nabla \vartheta_R(x),\nabla T^F_t v_0(x)\rangle  d\gamma_F(x) +\\
&-\int_\mathscr{O} \vartheta_R(x) \frac{|\nabla T^F_t v_0(x)|^2}{\sqrt{|\nabla T_t^F v_0(x)|^2+\frac{1}{R}}}
d\gamma_F(x)
\\
=&
-\int_{\partial \mathscr{O}} \frac{\vartheta_R(x)}{\sqrt{|\nabla T_t^F v_0(x)|^2+\frac{1}{R}}} 
\langle J \nu^\mathscr{O}(x) \nabla T^F_t v_0 (x), \nabla T^F_t v_0(x)\rangle 
G_d(x)\, d\mathscr{H}^{d-1}(x)
\\
&+\int_\mathscr{O} \frac{\vartheta_R(x)}{\sqrt{|\nabla T_t^F v_0(x)|^2+\frac{1}{R}}}
\Big( 
\frac{|\nabla T^F_t v_0(x)|^2}{|\nabla T_t^F v_0(x)|^2+\frac{1}{R}}
\left|
D^2 T^F_t v_0(x) \frac{\nabla T^F_t v_0(x)}{|\nabla T^F_t v_0(x)|}\right|^2 +\\
&- \|D^2 T^F_t v_0(x)\|^2_2 \Big) d\gamma_F(x) +
\\
&-\int_\mathscr{O} \frac{1}{\sqrt{|\nabla T_t^F v_0(x)|^2+\frac{1}{R}}} 
\langle D^2 T_t^F v_0(x) \nabla \vartheta_R(x),
\nabla T^F_t v_0(x) \rangle d\gamma_F(x)  +\\
&-\int_\mathscr{O} \vartheta_R(x) \frac{|\nabla T^F_t v_0(x)|^2}{\sqrt{|\nabla T_t^F v_0(x)|^2+\frac{1}{R}}}
d\gamma_F(x),
\end{align*}
where we have denoted by $\|D^2 T^F_t v_0(x)\|$ the Euclidean norm of the matrix $D^2 T^F_t v_0(x)$.\\
The second integral in the right hand side is negative because 
\[
\left|
D^2 T^F_tv_0(x) \frac{\nabla T^F_tv_0(x)}{|\nabla T^F_t v_0(x)|}
\right|^2
\]
is bounded   by the square of the largest eigenvalue of $D^2 T^F_t v_0(x)$, while
$\|D^2 T^F_t v_0(x)\|^2_2$ is the sum of the square of all the eigenvalues. 
In the first integral we have denoted by $J \nu^\mathscr{O}(x)$ the 
Jacobian matrix of $  \nu^\mathscr{O}$ at $x$, and we have used the fact that $\nabla T^F_t v_0(x)$ is 
orthogonal to $\partial \mathscr{O}$ and any  tangential derivative of 
$\langle \nabla T^F_t v_0 (x), \nu^\mathscr{O}(x)\rangle$ is equal to $0$, that is
\begin{align*}
0=&
\langle \nabla \left(\langle \nabla T^F_t v_0 (x), \nu^\mathscr{O} (x)\rangle \right),  
\nabla T^F_t v_0(x)\rangle 
\\
=& \langle D^2 T^F_t v_0(x) \nu^\mathscr{O}(x), \nabla T^F_t v_0(x)\rangle  
+ \langle J \nu^\mathscr{O}(x)\nabla T^F_t v_0(x),  \nabla T^F_tv_0(x)\rangle.
\end{align*}
The convexity  of $\partial\mathscr{O}$ implies 
\[
\langle J \nu^\mathscr{O} \xi ,  \xi\rangle  \geq 0,\qquad \forall \xi \in (\nu^\mathscr{O}(x))^\perp
\]
and so we can conclude that 
\begin{align*}
{\mathcal F}'_{R,v_0}(t)\leq&
-\int_\mathscr{O}  \frac{1}{\sqrt{|\nabla T_t^F v_0(x)|^2+\frac{1}{R}}}
\langle D^2 T_t^F v_0(x) \nabla \vartheta_R(x), 
\nabla T^F_t v_0(x)\rangle  d\gamma_F(x)  \\
\leq &
\frac{2}{R} \|\, |D^2 T^F_t v_0|\, \|^2_{L^2(\mathscr{O},\gamma_F)},
\end{align*}
where the last inequality holds with $|D^2 T^F_t v_0|$ the operator norm of $D^2 T^F_t v_0$. \\ 
As a consequence, for any $t_1<t_2$
\[
{\mathcal F}_{R,v_0}(t_2) = {\mathcal F}_{R,v_0}(t_1)+\int_{t_1}^{t_2} {\mathcal F}'_{R,v_0}(s)ds . 
\leq {\mathcal F}_{R,v_0}(t_1)+
\frac{2}{R} \int_{t_1}^{t_2} \| \,|D^2 T^F_s v_0|\, \|^2_{L^2(\mathscr{O},\gamma_F)}ds;
\]
Letting $R\to +\infty$ we   obtain the monotonicity of ${\mathcal F}_{v_0}$, since
\[
{\mathcal F}_{v_0}(t_2)=\lim_{R\to +\infty}{\mathcal F}_{R,v_0}(t_2)
\leq 
\lim_{R\to +\infty} \left( {\mathcal F}_{R,v_0}(t_1)+
\frac{2}{R}\int_{t_1}^{t_2} \|\, |D^2 T^F_s v_0|\, \|^2_{L^2(\mathscr{O},\gamma_F)}ds \right)
={\mathcal F}_{v_0}(t_1).
\]
To prove the second part of the statement, let us fix $w\in W^{1,2}({\mathscr O},\gamma_F)$. 
Since $ W^{1,2}(\mathscr{O},\gamma_F) $ is the domain of $(I-L)^{1/2}$, then $T^F_t $ is strongly 
continuous in $ W^{1,2}(\mathscr{O},\gamma_F) $. It follows that $|\nabla T^F_t w|$ converges 
to $|\nabla  w|$ in $L^1(\mathscr{O}, \gamma)$  as $t\to 0$, and hence by \eqref{W11subsetBV}
\[
\lim_{t\to 0} \int_\mathscr{O} |\nabla T^F_t w(x)|d\gamma_F(x)=  
\int_\mathscr{O} |\nabla   w(x)|d\gamma_F(x) = |D_{\gamma_F} w|(\mathscr{O}).
\]
Therefore, for any $t>0$ and for $w\in W^{1,2}(\mathscr{O},\gamma_F)$
\[
{\mathcal F}_w (t)\leq {\mathcal F}_w(0^+)=\lim_{t\to 0} \int_\mathscr{O} |\nabla T^F_tw(x)|d\gamma_F(x)
=|D_Fw|(\mathscr{O}). 
\]
Now let $v_0\in BV(\mathscr{O},\gamma_F) \cap L^2(\mathscr{O},\gamma_F)$. 
Thanks to Lemma~\ref{AnzGiaW12}, there exists a sequence of functions 
$(w_j)_{j\in \N}\subset W^{1,2}(\mathscr{O},\gamma_F)$ such that 
\[
\lim_{j\to \infty}\|w_j-w_0\|_{L^2(\mathscr{O},\gamma_F)} =0, \quad 
\lim_{j\to \infty}\int_\mathscr{O}  |\nabla w_j (x)|d\gamma_F(x) = |D_{\gamma_F}v_0|(\mathscr{O}). 
\]
Then, for every $t>0$ we have 
$\lim_{j\to \infty} |\nabla T^F_t w_j  | =  |\nabla T^F_t v_0|$ in $L^2(\mathscr{O},\gamma_F)$, 
and by the first part of the proof 
\[
\int_\mathscr{O} |\nabla T^F_t w_j (x)|d\gamma_F(x) \leq  
\int_\mathscr{O} |\nabla   w_j (x)|d\gamma_F(x) , \quad j\in \N, 
\]
so that 
\begin{align*}
\int_\mathscr{O} |\nabla T^F_t v_0 (x)|d\gamma_F (x)
& = \lim_{j\to +\infty} \int_\mathscr{O} |\nabla T^F_t w_j (x)|d\gamma_F(x) \\
&\leq
\liminf_{j\to +\infty} \int_\mathscr{O} |\nabla w_j (x)|d\gamma_F(x) 
= |D_{\gamma_F} v_0|(\mathscr{O}).
\end{align*}
\end{proof}

\begin{Remark}
Some arguments of this section may be related to  the approach of Bakry and \'Emery \cite{bakem}, 
that has been widely developed in the last years also in the metric space setting, and in fact 
  results of this type in such framework can be found in \cite{Sav}, to which we refer for the details. 
\end{Remark}

\subsection{Convex sets as countable intersection of cylindrical convex sets}
\label{approxcx}

In this section we consider a convex open set $\Omega\subset X$ and a sequence $(\Omega_n)_n$ 
of open convex cylindrical sets of the form $\Omega_n=\pi_{F_n}^{-1}(\mathscr{O}_n)$, where, 
for every $n\in{\mathbb N}$, $\pi_{F_n}$ is a finite dimensional projection from $X$ onto $F_n$, such that  
$\Omega_n\subset \Omega_{n+1}$, $\partial \mathscr{O}_n$ is regular, $\Omega\subset \Omega_n$ and 
\[
\overline \Omega=\bigcap_{n\in \N} \overline \Omega_n.
\]
We give a construction of the approximating sets $\Omega_n$ in the Appendix.\\
Since $\Omega$ and  $\Omega_n$ are  open convex sets, by \cite[Prop. 4.2]{CasLunMirNov} we know that 
$\gamma(\partial \Omega)=\gamma(\partial \Omega_n)=0$. Moreover, 
$\Omega\subset \Omega_n$ for any $n\in \N$ and 
\[
\gamma\left( 
\bigcap_{n\in \N}\Omega_n \setminus \Omega
\right)=0.
\]

\subsection{Approximations of semigroups}

We recall that $L$ is the Ornstein-Uhlenbeck operator in $L^2(\Omega, \gamma)$ defined in 
Section \ref{sobolev}. Similarly, $L_n$ are the Ornstein-Uhlenbeck operators in $L^2(\Omega_n, \gamma)$ 
associated with the Dirichlet forms 
\[
\mathscr{E}^{(n)}(u,v)=\int_{\Omega_n} \scalH{\nabla_H u}{\nabla_H v}d\gamma, \quad 
u, \;v\in W^{1,2}(\Omega_n, \gamma).
\]
The semigroups generated by $L_n$ are denoted by $(T^{(n)}_t)_{t\geq 0}$. We denote by $R(\lambda,A)$ the 
resolvent of the operator $A$. In the next proposition we deal with complex-valued 
functions.

\begin{Proposition}\label{approxSG}
Under the above assumptions, for any $f\in L^2(X,\gamma)$ and for any 
$\lambda\in \C\setminus (-\infty,0]$, 
\[
\lim_{n\to \infty} \Big(R(\lambda,L_n) (f_{|\Omega_n})\Big)_{|\Omega} = 
R(\lambda,L) (f_{|\Omega})\qquad \mbox{in } W^{1,2}(\Omega,\gamma).
\]
It follows  
\[
(T^{(n)}_tu_{0|\Omega_n})_{|\Omega}  \to T_t u_{0|\Omega}  \qquad \mbox{in } W^{1,2}(\Omega,\gamma)
\]
for any $u_0\in L^2(X,\gamma)$ and $t>0$.
\end{Proposition}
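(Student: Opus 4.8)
The plan is to prove the two convergences separately. For the resolvents I would argue variationally (a Mosco--type argument), comparing the weak formulations of the equations on the domains $\Omega_n$ and on $\Omega$; for the semigroups I would then represent $T_t$ and $T^{(n)}_t$ as Dunford integrals of the resolvents along a common contour and pass to the limit by dominated convergence, exploiting that self-adjointness and nonpositivity of all the $L_n$ make the resolvent bounds uniform in $n$.

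\emph{Resolvent convergence.} Fix $f\in L^2(X,\gamma)$ and $\lambda\in\C\setminus(-\infty,0]$, and let $u_n=R(\lambda,L_n)(f|_{\Omega_n})\in W^{1,2}(\Omega_n,\gamma)$ and $u=R(\lambda,L)(f|_\Omega)\in W^{1,2}(\Omega,\gamma)$, so that $u_n$ is the unique solution of
\[
\lambda\int_{\Omega_n}u_n\bar\varphi\,d\gamma+\int_{\Omega_n}[\nabla_H u_n,\overline{\nabla_H\varphi}]_H\,d\gamma=\int_{\Omega_n}f\bar\varphi\,d\gamma,\qquad \varphi\in W^{1,2}(\Omega_n,\gamma;\C),
\]
and $u$ of the analogous identity on $\Omega$. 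Testing with $\varphi=u_n$ and using $\|f|_{\Omega_n}\|_{L^2(\Omega_n)}\le\|f\|_{L^2(X)}$ (splitting into real and imaginary parts when $\lambda\notin\R$) yields a bound $\|u_n\|_{W^{1,2}(\Omega_n,\gamma)}\le C(\lambda)\|f\|_{L^2(X)}$ independent of $n$; since $\Omega\subset\Omega_n$ one has $\|v|_\Omega\|_{W^{1,2}(\Omega,\gamma)}\le\|v\|_{W^{1,2}(\Omega_n,\gamma)}$ and $\nabla_H(v|_\Omega)=(\nabla_H v)|_\Omega$, so $(u_n|_\Omega)_n$ is bounded in $W^{1,2}(\Omega,\gamma)$ and, along a subsequence, $u_n|_\Omega\rightharpoonup w$ weakly there. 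To identify $w$ I would test the $\Omega_n$-equation with $\varphi=\psi$, where $\psi\in{\rm Lip}(\Omega)$ is first extended (by McShane) to a Lipschitz function on $X$, still denoted $\psi$: then $\psi\in W^{1,2}(\Omega_n,\gamma)\cap W^{1,2}(\Omega,\gamma)$ and $\psi,\nabla_H\psi\in L^2(X,\gamma)$ (Fernique's theorem and $H$-differentiability of Lipschitz functions). Splitting $\int_{\Omega_n}=\int_\Omega+\int_{\Omega_n\setminus\Omega}$, the "collar" integrals vanish as $n\to\infty$, since $\gamma(\Omega_n\setminus\Omega)\to0$ (Subsection~\ref{approxcx}) and the $u_n$ are bounded, while on $\Omega$ the weak convergence of $u_n|_\Omega$ in $L^2(\Omega,\gamma)$ and of $\nabla_H u_n|_\Omega$ in $L^2(\Omega,\gamma;H)$, together with dominated convergence on the right-hand side, allow us to pass to the limit; thus $w$ solves the $\Omega$-equation against every such $\psi$, hence --- ${\rm Lip}(\Omega)$ being dense in $W^{1,2}(\Omega,\gamma)$ by definition --- against all test functions, so $w=u$ by uniqueness. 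As the limit is subsequence-independent, $u_n|_\Omega\rightharpoonup u$ for the whole sequence. Finally, testing the $\Omega_n$-equation with $u_n$ and letting $n\to\infty$ gives $\lambda\|u_n\|_{L^2(\Omega_n)}^2+\|\nabla_H u_n\|_{L^2(\Omega_n;H)}^2\to\lambda\|u\|_{L^2(\Omega)}^2+\|\nabla_H u\|_{L^2(\Omega;H)}^2$; combining this with $\|u_n\|_{L^2(\Omega)}\le\|u_n\|_{L^2(\Omega_n)}$, the analogous gradient inequality, and lower semicontinuity of the two $\Omega$-norms under weak convergence forces norm convergence of both pieces, hence strong convergence $u_n|_\Omega\to u$ in $W^{1,2}(\Omega,\gamma)$.

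\emph{Semigroup convergence.} All the $L_n$ and $L$ being self-adjoint and nonpositive, their spectra lie in $(-\infty,0]$ and they generate bounded analytic semigroups; fixing one contour $\Gamma\subset\C\setminus(-\infty,0]$ around $(-\infty,0]$ along which $|e^{t\lambda}|$ decays exponentially (for instance the positively oriented boundary of a $1$-neighbourhood of $(-\infty,0]$), one has $T_t=\frac1{2\pi i}\int_\Gamma e^{t\lambda}R(\lambda,L)\,d\lambda$ and $T^{(n)}_t=\frac1{2\pi i}\int_\Gamma e^{t\lambda}R(\lambda,L_n)\,d\lambda$ for every $t>0$, whence
\[
\big(T^{(n)}_t(u_0|_{\Omega_n})\big)\big|_\Omega-\big(T_tu_0\big)\big|_\Omega=\frac1{2\pi i}\int_\Gamma e^{t\lambda}\Big[\big(R(\lambda,L_n)(u_0|_{\Omega_n})\big)\big|_\Omega-R(\lambda,L)(u_0|_\Omega)\Big]\,d\lambda
\]
as a $W^{1,2}(\Omega,\gamma)$-valued integral. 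By the resolvent convergence the integrand tends to $0$ in $W^{1,2}(\Omega,\gamma)$ for each $\lambda\in\Gamma$, and it is dominated uniformly in $n$ by $C\,|e^{t\lambda}|\,\phi(\lambda)\,\|u_0\|_{L^2(X)}$, where $\phi(\lambda)=\|(I-L_n)^{1/2}R(\lambda,L_n)\|_{L^2(\Omega_n)\to L^2(\Omega_n)}=\sup_{s\le0}(1-s)^{1/2}/|\lambda-s|$ (functional calculus, together with the identification $W^{1,2}(\Omega_n,\gamma)=D((I-L_n)^{1/2})$ with the graph norm): this is independent of $n$ and grows only polynomially along $\Gamma$, so $\int_\Gamma|e^{t\lambda}|\,\phi(\lambda)\,|d\lambda|<\infty$ for $t>0$. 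Dominated convergence then yields $\big(T^{(n)}_t(u_0|_{\Omega_n})\big)|_\Omega\to T_tu_0|_\Omega$ in $W^{1,2}(\Omega,\gamma)$.

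The delicate part is the resolvent convergence --- more precisely, the passage to the limit across the varying domains $\Omega_n$ (which forces one to use test functions admissible on all of them, obtained by extending Lipschitz functions from $\Omega$ to $X$) and, above all, the upgrade of weak to strong convergence via the energy identity, where the "collar" $\Omega_n\setminus\Omega$ must be controlled; this is exactly where $\gamma(\Omega_n\setminus\Omega)\to0$ and the convexity-based construction of Subsection~\ref{approxcx} enter. Given the resolvent convergence, the semigroup statement is routine in view of the $n$-uniform resolvent bounds afforded by self-adjointness.
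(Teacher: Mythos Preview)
Your proof is correct and follows essentially the same route as the paper's: uniform $W^{1,2}$ bounds on $u_n$ from self-adjointness/dissipativity, weak subsequential compactness on $\Omega$, identification of the limit by testing against a class of functions that is admissible on every $\Omega_n$ and whose restrictions are dense in $W^{1,2}(\Omega,\gamma)$, upgrade to strong convergence via the energy identity, and finally the Dunford integral with dominated convergence for the semigroups. The only cosmetic differences are that you take test functions from ${\rm Lip}(\Omega)$ extended to $X$ by McShane (the paper simply takes $\varphi\in W^{1,2}(X,\gamma)$ and invokes density of restrictions), and that you spell out the dominating function $\phi(\lambda)=\sup_{s\le 0}(1-s)^{1/2}/|\lambda-s|$ via functional calculus, whereas the paper just says ``dominated convergence'' in one line; your version is slightly more explicit here.
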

\begin{proof}
Fix $\lambda \in \C\setminus (-\infty,0]$ and set $\vartheta ={\rm arg} \lambda$.  
Since each $L_n$ is self--adjoint and dissipative, $\lambda\in \rho(L_n)$ and  
\[
\|R(\lambda,L_n)\|_{\mathcal{L}(L^2(\Omega_n,\gamma))}\leq \frac{1}{|\lambda|\cos\frac{\vartheta}{2}}, 
\]
\[
\|L_n R(\lambda,L_n)\|_{\mathcal{L}(L^2(\Omega_n,\gamma))}
=\| \lambda R(\lambda,L_n)-I\|_{\mathcal{L}(L^2(\Omega_n,\gamma))}
\leq \frac{1}{\cos\frac{\vartheta}{2}}+1. 
\]
Setting $u=R(\lambda,L) (f_{|\Omega})$ and $u_n=R(\lambda,L_n)(f_{|\Omega_n})$, such estimates imply
\[
\|u_n\|_{L^2(\Omega_n,\gamma)} 
\leq
\frac{1}{|\lambda| \cos\frac{\vartheta}{2}}\|f\|_{L^2(X,\gamma)} 
\]
and 
\[
\int_{\Omega_n} |\nabla_H u_n|^2_H d\gamma 
= -\int_{\Omega_n} u_n L_n u_n d\gamma  
\leq 
\frac{1+\cos\frac{\vartheta}{2}}{|\lambda| \cos^2\frac{\vartheta}{2}} \|f\|^2_{L^2(X,\gamma)}. 
\]
Recalling  that $\Omega\subset \Omega_n$, we arrive at the estimate
\begin{align*}
\|u_{n|\Omega}\|_{W^{1,2}(\Omega,\gamma)} 
\leq
\|u_n\|_{W^{1,2}(\Omega_n,\gamma)} 
\leq
\frac{\|f\|_{L^2(X,\gamma)}}{\cos\frac{\vartheta}{2}} 
\left(
\frac{1}{|\lambda|}+\sqrt{\frac{1+\cos\frac{\vartheta}{2}}{|\lambda|}}
\right).
\end{align*}
Then the sequence $(u_{n|\Omega})$ is bounded in $W^{1,2}(\Omega,\gamma)$, so that  it admits a 
subsequence weakly convergent to some function $u_\infty$.
By definition,  $u_n=R(\lambda,L_n)(f_{|\Omega_n})$ means
\begin{equation}
\label{defSolN}
\lambda\int_{\Omega_n} u_n \varphi \,d\gamma + \int_{\Omega_n} \scalH{\nabla_H u_n}{\nabla_H \varphi}d\gamma
=\int_{\Omega_n} f\varphi \, d\gamma,\qquad \forall \varphi \in W^{1,2}(\Omega_n,\gamma).
\end{equation}
Fix any $\varphi \in W^{1,2}(X,\gamma)$. Notice that
\[
\left|
\int_{\Omega_n\setminus \Omega} f\varphi \, d\gamma\right|
\leq \| f\|_{L^2(X,\gamma)} \| \varphi\|_{L^2(\Omega_n\setminus\Omega,\gamma)}
\]
where $\lim_{n\to \infty} \| \varphi\|_{L^2(\Omega_n\setminus\Omega,\gamma)} =0$ since 
$\gamma(\Omega_n\setminus\Omega)$ vanishes as $n\to \infty$. Then, 
\[
\lim_{n\to+\infty}
\int_{\Omega_n\setminus \Omega} f\varphi \, d\gamma=0.
\]
Recalling that  $\|u_n\|_{W^{1,2}(\Omega_n,\gamma)}$ is bounded by a constant independent of $n$, 
the same  argument yields 
\[
\lim_{n\to+\infty}
\int_{\Omega_n\setminus \Omega} u_n\varphi \,d\gamma=
\lim_{n\to+\infty}
\int_{\Omega_n\setminus \Omega} \scalH{\nabla_H u_n}{\nabla_H\varphi} d\gamma= 0.
\]
We   conclude that
\begin{align*}
\lambda \int_\Omega u_\infty \varphi \, d\gamma 
+\int_{\Omega} \scalH{\nabla_H u_\infty}{\nabla_H \varphi} d\gamma 
&= \lim_{n\to+\infty} \left(
\lambda \int_{\Omega_n} u_n \varphi \,d\gamma +\int_{\Omega_n} \scalH{\nabla_H u_n}{\nabla_H \varphi} d\gamma 
\right) \\
&=
\lim_{n\to+\infty}  \int_{\Omega_n} f\varphi \, d\gamma =\int_\Omega f\varphi \, d\gamma.
\end{align*}
Since the restrictions to $\Omega$ of elements of $W^{1,2}(X,\gamma)$ are dense in $W^{1,2}(\Omega,\gamma)$, 
we obtain
\[
\lambda \int_\Omega u_\infty \varphi \, d\gamma 
+\int_{\Omega} \scalH{\nabla_H u_\infty}{\nabla_H \varphi} d\gamma 
=\int_\Omega f\varphi \, d\gamma, \quad \forall \varphi \in W^{1,2}(\Omega,\gamma). 
\]
Therefore, the limit function $u_\infty$ coincides   with $u= R(\lambda, L)(f_{|\Omega})$ and  the whole 
sequence $u_{n|\Omega}$ weakly converges in $W^{1,2}(\Omega,\gamma)$ to $u$, with no need of subsequences.  

Let us now show that $u_{n|\Omega}$ converges strongly to $u$ in $W^{1,2}(\Omega,\gamma)$. To this aim
it is enough to show that 
\[
\limsup_{n\to+\infty} \|u_n\|_{W^{1,2}(\Omega,\gamma)}\leq \| u\|_{W^{1,2}(\Omega,\gamma)}.
\]
To see this we use \eqref{defSolN} with $\varphi=\bar u_n$ and obtain 
\[
\lambda \int_{\Omega_n} |u_n|^2 d\gamma +\int_{\Omega_n} |\nabla_H u_n|^2_H d\gamma
=\int_{\Omega_n} f\bar u_n d\gamma.
\]
Letting $n\to +\infty$, as before we obtain
\begin{equation}    \label{serve}
\begin{array}{lll}
\displaystyle{\lim_{n\to+\infty} \left(
\lambda \int_{\Omega_n} |u_n|^2 d\gamma +\int_{\Omega_n} |\nabla_H u_n|^2_H d\gamma 
\right)}
& =& \displaystyle{\int_{\Omega} f\bar u d\gamma }\\
\\
& =& 
\lambda \displaystyle{\int_\Omega |u|^2d\gamma +\int_\Omega |\nabla_H u|^2_H
d\gamma }
\end{array}
\end{equation}
since  $\lambda u-Lu=f$.

We write $\lambda=\alpha+i\beta$ with $\alpha$, $\beta\in \R$. 
If $\beta \neq 0$, taking the imaginary parts in \eqref{serve} we get
\[
\lim_{n\to+\infty} \beta \int_{\Omega_n} |u_n|^2d\gamma =\beta \int_\Omega |u|^2d\gamma,
\]
that is
\[
\lim_{n\to+\infty} \int_{\Omega_n} |u_n|^2d\gamma =\int_\Omega |u|^2d\gamma ,
\]
and we deduce 
\[
\lim_{n\to+\infty} \int_{\Omega_n} |\nabla_Hu_n|_H^2d\gamma =\int_\Omega |\nabla_H u|_H^2d\gamma.
\]
Therefore, 
\begin{align*}
\limsup_{n\to+\infty} \left(
\int_{\Omega} |u_n|^2 d\gamma +\int_{\Omega} |\nabla_H u_n|^2_H d\gamma
\right)
&\leq
\limsup_{n\to+\infty} \left(
\int_{\Omega_n} |u_n|^2 d\gamma +\int_{\Omega_n} |\nabla_H u_n|^2_H d\gamma
\right) \\
&=
\int_\Omega |u|^2d\gamma +\int_\Omega |\nabla_H u|^2_Hd\gamma .
\end{align*}
If $\beta=0$, since $\lambda\in {\mathbb C}\setminus (-\infty,0]$, we have $\alpha>0$;
\eqref{serve} gives 
\[
\lim_{n\to \infty} \alpha \|u_n\|_{L^2(\Omega_n,\gamma)}^2+\|\nabla_H u_n\|_{L^2(\Omega_n,\gamma)}^2
=\alpha \|u\|_{L^2(\Omega,\gamma)}^2+\|\nabla_H u\|^2_{L^2(\Omega,\gamma)}, 
\]
and since the norm $u\mapsto (\alpha \|u\|_{L^2(\Omega,\gamma)}^2+\|\nabla_H u\|_{L^2(\Omega,\gamma)}^2)^{1/2}$ 
is equivalent to the norm of $W^{1,2}(\Omega, \gamma)$ we are done. 
 
Convergence of resolvents implies convergence of semigroups. Indeed, it is sufficient to use the Dominated 
Convergence Theorem in  the canonical representation  formula, 
\[
T_t f_{|\Omega} = \frac{1}{2\pi i}\int_{\Gamma}R(\lambda,L) f_{|\Omega}\,d\lambda
=\lim_{n\to +\infty}\frac{1}{2\pi i}\int_{\Gamma} R(\lambda,L_n) f_{|\Omega_n}\, d\lambda
=\lim_{n\to +\infty} T^{(n)}_t f_{|\Omega_n}, 
\]
where $\Gamma$ is any of the usual integration paths for analytic semigroups. 
\end{proof}

\subsection{Conclusion: approximation by finite dimensional estimates}       \label{Conclusion}

In this Subsection we complete the proof of Theorem \ref{mainThm}. 

\begin{proof}
First of all, by lower semicontinuity (Corollary \ref{lsc}) we know that 
\[
|D_\gamma u_0|(\Omega)\leq \liminf_{t\to 0}|D_\gamma T_tu_0|(\Omega) = 
\liminf_{t\to 0}\int_\Omega |\nabla_H T_tu_0|_H d\gamma
\]
by the strong continuity in $L^2(\Omega,\gamma)$ of the semigroup $(T_t)_{t\geq 0}$.

Next, we prove  the estimate
\[
\int_{\Omega_n} |\nabla_H T^{(n)}_t u_{0|\Omega_n}|_H d\gamma \leq |D_\gamma u_0|(\Omega_n)
\]
where $\Omega_n$ is the approximation of $\Omega$ constructed in the Appendix, $(T^{(n)})_{t\geq 0}$ 
is the semigroup associated with the Dirichlet form ${\mathcal E}^{(n)}$ in $L^2(\Omega_n, \gamma)$. 
 
Let $v_j={\mathbb E}_ju$ be the sequence of canonical cylindrical approximations of $u_0$, converging 
to $u_0$ in variation. Fixed any $n$, $j$ we choose a finite 
dimensional space $F\subset Q(X^*)$  such that  $\mathscr{O}_n \subset F$ and
$v_j(x)=w_j(\pi_F(x))$ with $w_j:F\to \R$. Then, we have  the equality
\[
T^{(n)}_t v_{j|\Omega_n}=T^F_t (w_j\circ \pi_F)_{|\mathscr{O}_n}
\]
where $(T^F_t)_{t\geq 0}$ is the semigroup associated with the Dirichlet form
\[
\int_{\mathscr{O}_n} \langle\nabla u, \nabla v\rangle d\gamma_F 
\]
in $L^2({\mathscr{O}}_n, \gamma_F)$. This follows from the fact that the function
\[
g:[0,  \infty) \to L^2(\Omega_n, \gamma), \quad g(t)=T^{F}_t (w_j\circ\pi_F)_{|\mathscr{O}_n}
\]
belongs to 
$C([0, \infty);L^2(\Omega_n,\gamma))\cap C^1((0,\infty);L^2(\Omega_n,\gamma))\cap C((0,\infty);D(L_n ))$ 
and satisfies
\[
\left\{ \begin{array}{l}
g'(t) = L_ng(t), \quad t>0, 
\\
\\
g(0) = 
w_j \circ \pi_F .
\end{array}\right. 
\]
In the language of semigroup theory, $g$ is a classical solution to the above Cauchy problem in the space $L^2(\Omega_n, \gamma)$. 
It is well known that the classical solution is unique; in our case  it coincides with 
$T^{(n)}_t (w_j \circ \pi_F)_{|\Omega_n}= T^{(n)}_t v_{j|\Omega_n}$. Proposition \ref{mainpropFD}  yields
\[
\int_{\Omega_n} |\nabla_H T^{(n)}_t v_{j|\Omega_n}|_H d\gamma=
\int_{\mathscr{O}_n} |\nabla T^F_t w_{j|\mathscr{O}_n}|d\gamma_F (y)
\leq |D_{\gamma_F} w_j|(\mathscr{O}_n) = |D_\gamma v_j|(\Omega_n).
\]
Let us recall that $v_j={\mathbb E}_ju\to u$ in $L^2(X,\gamma)$ and in variation.  
Therefore, taking into account Proposition \ref{mainpropFD} and \eqref{conditineq} we obtain
\begin{align*}
\int_{\Omega_n}|\nabla_H T^{(n)}_t u_{0|\Omega_n}|_H d\gamma  
= &
\lim_{j\to +\infty} \int_{\Omega_n} |\nabla_H T^{(n)}_t v_{j|\Omega_n}|_H d\gamma  
\\
\leq& \liminf_{j\to +\infty} |D_\gamma v_j| (\Omega_n) 
= |D_\gamma u_0| (\Omega_n) \leq |D_\gamma u_0| (\overline\Omega_n).
\end{align*}
Now, as a consequence of Proposition \ref{approxSG} and the hypothesis $|D_\gamma u_0|(\partial \Omega)=0$ 
we obtain  
\begin{align}
\nonumber
\int_\Omega |\nabla_H T_t u_0|_H d\gamma = &
\lim_{n\to +\infty} \int_{\Omega} |\nabla_H T^{(n)}_tu_{0|\Omega_n}|_H d\gamma  
\leq \limsup_{n\to +\infty} \int_{\Omega_n} |\nabla_H T^{(n)}_tu_{0|\Omega_n}|_H d\gamma  
\\
\label{lastEst}
\leq&\lim_{n\to +\infty} |D_\gamma u_0|(\overline{\Omega}_n)
=|D_\gamma u_0|(\Omega) = |D_\gamma u_0|(\Omega), 
\end{align}
which finishes the proof of the Theorem. 
\end{proof}

\begin{Remark}
It is worth noticing that the proof of Theorem \ref{mainThm}, estimate \eqref{lastEst}, yields that also in the infinite 
dimensional setting the map 
\[
t \mapsto \int_\Omega |\nabla_H T_t u_0(x)|_H d\gamma(x) 
\]
is monotone decreasing, for any $u_0\in BV(X,\gamma)\cap L^2(X,\gamma)$.
\end{Remark}

\appendix

\section{Finite dimensional convex analysis}

This section is devoted to  recall some   properties of convex sets and convex functions in Euclidean 
spaces. Most of these results can likely be found in the literature, but we recall 
here some of the proofs for the reader's convenience. Let $C\subset \R^d$ be a closed convex set
with interior part $C^\circ\neq\emptyset$. Possibly translating $C$, without loss of generality we may assume that $0\in C^\circ$.

If $C$ is unbounded, then there exists 
$\nu\in {\mathbb S}^{d-1}$ such that $ t\nu\in C$ for all $t\geq0$. Indeed, if $(x_j)_j\subset C$ is 
a sequence with $\|x_j\|\to +\infty$, then 
\[
\nu_j=\frac{x_j }{\|x_j \|}\subset {\mathbb S}^{d-1}
\]
admits an accumulation point $\nu$; convexity and closedness of $C$ imply that
\[
 t\nu\in C,\qquad \forall t\geq 0.
\]
We set
\[
{\mathbb S}^{d-1}_{C }=\{ \nu\in {\mathbb S}^{d-1}:  t\nu\in C,\quad \forall t\geq 0\}
\]
and we define the maximal cone with vertex at $ 0$ contained in $C$, 
\[
K_C =\{  t\nu: t\geq 0, \nu\in {\mathbb S}^{d-1}_{C }\},
\]
while we set  $K_C =\{ 0\}$ if $C$ is bounded.

We define the map
\[
\m(x)=\inf \{ \lambda\geq 0: x \in \lambda C\}. 
\]
If $x\in K_C $ we have $\m(x)=0$. If $x\not\in K_C $ there exists a unique point $y\in  \partial C$ such that 
\[
x =\m(x) y.
\]
We set 
\[
y= p_C(x). 
\]

\begin{Proposition}\label{propMink}
Let $C\subset \R^d$ be a closed convex set and let $ 0\in C^\circ$. Setting  
\[
r=\sup \{ t>0 : B_t( 0)\subset C\},
\]
$\m$ is convex,  $\frac{1}{r}$--Lipschitz continuous and $C=\{\m\leq1\}$. In addition, if 
$\partial C$ is $C^1$, $\m$ is differentiable at any point $x\not\in \partial K_C $; at such points 
$\langle \nabla \m(x), x\rangle = \m(x)$. 
\end{Proposition}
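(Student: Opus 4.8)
The plan is to verify each assertion about $\m$ in turn, exploiting that $\m$ is the Minkowski gauge of a convex body with $0$ in its interior. First I would establish \emph{convexity and positive homogeneity}. Positive $1$-homogeneity, $\m(\lambda x)=\lambda\m(x)$ for $\lambda\ge 0$, is immediate from the definition by rescaling the competitor set $\mu C$. For convexity, given $x,y$ with $\m(x)<a$, $\m(y)<b$ one has $x\in aC$, $y\in bC$, so $x/a,\,y/b\in C$; writing $x+y=(a+b)\bigl(\tfrac{a}{a+b}\tfrac{x}{a}+\tfrac{b}{a+b}\tfrac{y}{b}\bigr)$ and using convexity of $C$ gives $x+y\in(a+b)C$, i.e. $\m(x+y)\le a+b$; letting $a\downarrow\m(x)$, $b\downarrow\m(y)$ yields subadditivity, and together with $1$-homogeneity this is convexity.

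Next, the \emph{identity $C=\{\m\le 1\}$}: if $\m(x)\le 1$ then, since $C$ is closed, $x\in\bigcap_{\lambda>\m(x)}\lambda C\subset C$ (using that $\lambda C$ decreases to $\overline{\m(x)C}\subset C$ as $\lambda\downarrow\m(x)$, with a separate trivial check at $\m(x)=0$ where $x\in K_C\subset C$); conversely $x\in C$ forces $\m(x)\le 1$ directly. For the \emph{Lipschitz bound}, since $B_r(0)\subset C$ one has $B_{r}(0)\subset\{\m\le 1\}$, hence $\m(z)\le 1$ whenever $|z|\le r$, i.e. $\m(z)\le |z|/r$ for all $z$ by homogeneity; then subadditivity gives $\m(x)-\m(y)\le\m(x-y)\le|x-y|/r$ and by symmetry $|\m(x)-\m(y)|\le|x-y|/r$. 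I would remark that the supremum defining $r$ is attained (or at worst one argues with $r$ replaced by any $r'<r$ and lets $r'\uparrow r$), so no issue arises there.

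Finally, the \emph{differentiability statement} under the hypothesis that $\partial C$ is $C^1$. Fix $x\notin\partial K_C$. If $x\in K_C^\circ$ (the interior of the cone, when $K_C\neq\{0\}$), then $\m\equiv 0$ on a neighborhood of $x$ in $K_C$ and more care is needed; the cleaner case is $x\notin K_C$, where there is a unique $y=p_C(x)\in\partial C$ with $x=\m(x)y$ and $\m(x)>0$. The plan is to use the $C^1$ boundary: near $y$, $\partial C=\{h=0\}$ for a $C^1$ function $h$ with $\nabla h(y)\neq 0$ pointing outward, and $\m$ is characterized implicitly by $h\bigl(x/\m(x)\bigr)=0$. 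Since $\nabla h(y)\cdot y>0$ (as $y$ is a boundary point of a convex body containing $0$ in its interior, the outer normal at $y$ has strictly positive pairing with $y$ — otherwise the supporting hyperplane at $y$ would separate $0$ from $y$, contradicting $0\in C^\circ$), the implicit function theorem applies to $\Phi(x,s)=h(x/s)$ at $s=\m(x)$ and gives that $\m$ is $C^1$ near $x$. Differentiating the homogeneity relation $\m(\lambda x)=\lambda\m(x)$ in $\lambda$ at $\lambda=1$ (Euler's identity) then yields $\langle\nabla\m(x),x\rangle=\m(x)$. The main obstacle I anticipate is handling the geometry cleanly at points of the cone $K_C$ and at points $x$ with $\m(x)=0$ but $x\notin\partial K_C$: there one should observe that such $x$ lie in the interior of $K_C$, $\m$ vanishes identically on a full neighborhood within the relevant directions, and in fact $\m\equiv 0$ on an open set containing $x$ (since $K_C$ has nonempty interior there), so $\nabla\m(x)=0$ and the identity $\langle\nabla\m(x),x\rangle=0=\m(x)$ holds trivially; making this dichotomy precise, and ruling out the boundary-of-cone case which is explicitly excluded, is where the argument needs the most attention.
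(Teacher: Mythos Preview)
Your proposal is correct and follows essentially the same route as the paper: subadditivity plus positive homogeneity for convexity, the inclusion $B_r(0)\subset C$ for the bound $\m(z)\le|z|/r$ and hence the Lipschitz estimate, the implicit function theorem applied to $h(x/s)$ (the paper writes $f(x/\lambda)$) together with the observation that the outward normal at a boundary point pairs positively with that point, and Euler's identity for the final formula. The only cosmetic differences are that you spell out $C=\{\m\le 1\}$ explicitly (the paper states it but does not argue it in the proof) and you discuss the interior-of-cone case in slightly more detail; the paper disposes of it in one line by noting $\m\equiv 0$ on $K_C$, hence $\m$ is $C^1$ on $K_C^\circ$ with vanishing gradient, which is exactly your conclusion.
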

\begin{proof} First of all we remark that $\m$ is positively homogeneous, namely $ \m(t x) = t \m(x)$ for every $t >0$ and $x\in \R^d$. 

Let us show that  $\m$ is convex. As a first step we show that for any $y_1,y_2\in \R^d$
\[
\m(y_1+y_2)\leq \m(y_1)+ \m(y_2).
\]
Indeed, for all $t_i> \m(y_i)$, $i=1,2$, we have $y_i\in t_i C$. Since 
$C$ is convex, then   $y_1+y_2\in (t_1+t_2) C$, i.e., $ \m(y_1+y_2)\leq t_1+t_2$.

Let now $x_1,x_2\in \R^d$ and $\lambda\in (0,1)$. Using the above inequality
and recalling that $\m$ is homogeneous, we obtain
\[
\m(\lambda x_1+(1-\lambda)x_2)\leq \m(\lambda x_1)+\m((1-\lambda)x_2) 
=\lambda \m(x_1)+(1-\lambda)\m(x_2). 
\]

Let us  show that $\m$ is  Lipschitz continuous. For any $t<r$ and any $x\in \R^d$, 
\[
t\frac{x}{\|x\|}\in B_t(0)\subset C,
\]
that is $x \in \frac{\|x\|}{t} C$, whence in particular $\m(x)\leq \frac{\|x\|}{t}$,
and  letting $t\to r$, we obtain $\m(x)\leq \frac{1}{r} \|x\|$.
As a consequence, for any $x,y\in \R^d$,
\[
\m(x)= \m(y+ x-y) \leq \m(y) +\m(x-y) \leq \m(y)+\frac{1}{r}\|x-y\|, 
\]
which implies 
\[
|\m(x)-\m(y)|\leq \frac{1}{r}\|x-y\|,\qquad \forall x,y \in \R^d.
\]

Let us prove the statements about the regularity of  $\m$. Every $p\in 
 \partial C$ has a neighborhood $U$ such that $\partial C\cap U$ is
the zero level of a $C^1$ function $f$ whose gradient does not vanish at $\partial C$. The function of $(d+1)$ variables 
\[
g(x,\lambda)=f\left(\frac{1}{\lambda}x\right),
\]
is well defined in a neighborhood of $(p, 1)$.  $g$ implicitly defines the Minkowski functional $\m$, since for every $x$ outside $K_C$ and $\lambda >0$,    $x/\lambda\in \partial C$ iff $\lambda = \m(x)$. Moreover, $\partial g(x,\lambda)/\partial \lambda  = -\lambda^{-2}\langle x, \nabla f(x/\lambda)\rangle $ does not vanish at any $(x, 1)$ with $x\in \partial C$, otherwise  the tangent hyperplane at $x$ would contain the origin, which is impossible since $C$ is convex. 

This shows that $\m$ is $C^1$ outside $\overline{K}_C$. Since $\m\equiv 0$ in $K_C$, it follows that $\m$ is $C^1$ outside $\partial K_C$. 
The equality $\langle \nabla \m(x), x\rangle = \m(x)$ at such points  follows from the Euler Theorem on homogeneous functions. 
  \end{proof}

We state the following technical lemma that is used in the proof of Lemma \ref{lemmaConvL1Unif}.

\begin{Lemma}\label{lemmaLowerBdd}
Let $C_n, C\subset \R^d$ be closed convex sets with $C_n$ converging in $L^1_{\rm loc}(\R^d)$ to $C\neq \emptyset$,
that is 
\[
\lim_{n\to +\infty} \mathscr{L}^d((C_n\Delta C)\cap \bar B_R)=0,\qquad
\forall R>0;
\]
then for every $r>0$ there exists $\alpha(r)>0$ such that for any bounded sequence $x_n\in \partial C_n$
\[
\mathscr{L}^d(C_n\cap B_r(x_n))\geq \alpha(r).
\]
\end{Lemma}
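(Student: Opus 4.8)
The plan is to exploit a standard fact of convex geometry: a convex body that contains a fixed ball occupies a definite proportion of any small ball centred at one of its boundary points. The constant one obtains this way will depend not only on $r$ but also on an a priori bound for the sequence $(x_n)$ and on the inradius of the limit set $C$; accordingly I will prove the statement in the form ``for every $r>0$ and every bounded sequence $x_n\in\partial C_n$ there is $\alpha=\alpha(r)>0$ such that $\mathscr{L}^d(C_n\cap B_r(x_n))\ge\alpha$ for all $n$''. I will also use the standing assumption of this section that the convex sets $C$, $C_n$ have nonempty interior (which is anyway forced here, since otherwise $\mathscr{L}^d(C_n\cap B_r(x_n))=0$).

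The first real step is to show that a fixed ball is eventually contained in all the $C_n$. Since $C^\circ\neq\emptyset$, fix $\rho>0$ and $z\in\R^d$ with $B_{2\rho}(z)\subset C$. I claim $B_\rho(z)\subset C_n$ for all large $n$. If not, then for infinitely many $n$ there is $w_n\in B_\rho(z)\setminus C_n$; as $C_n$ is closed and convex we may strictly separate $w_n$ from $C_n$ by a hyperplane, so $C_n$ lies in a closed half-space $H_n$ with $w_n\notin H_n$. Because $w_n\in B_\rho(z)$ we have $B_\rho(w_n)\subset B_{2\rho}(z)\subset C$, and the portion of $B_\rho(w_n)$ lying in the open half-space complementary to $H_n$ (which contains $w_n$) is disjoint from $C_n$ and has measure at least $\tfrac12\omega_d\rho^d$, where $\omega_d=\mathscr{L}^d(B_1(0))$; indeed it contains at least a half-ball, and more if the separating hyperplane is farther from $w_n$. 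Hence $\mathscr{L}^d\big((C\setminus C_n)\cap\overline B_{|z|+2\rho}\big)\ge\tfrac12\omega_d\rho^d$ for infinitely many $n$, contradicting the $L^1_{\rm loc}$ convergence $C_n\to C$. So there is $N$ with $B_\rho(z)\subset C_n$ for every $n\ge N$.

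Now fix a bounded sequence $x_n\in\partial C_n$, say $|x_n|\le M$, and put $M_0=M+|z|$. For $n\ge N$ we have $\rho\le|x_n-z|\le M_0$ (the lower bound because $x_n\in\partial C_n$ while $B_\rho(z)\subset C_n^\circ$), and by convexity $\mathrm{conv}(\{x_n\}\cup B_\rho(z))\subset C_n$; in particular, for each $s\in[0,1]$ the ball $(1-s)x_n+sB_\rho(z)=B_{s\rho}\big((1-s)x_n+sz\big)$ lies in $C_n$. Taking $s_*=\min\{1,\,r/(M_0+\rho)\}$, the ball $B'_n:=B_{s_*\rho}\big((1-s_*)x_n+s_*z\big)\subset C_n$ has centre within distance $s_*|x_n-z|\le s_*M_0$ of $x_n$ and radius $s_*\rho$, so $B'_n\subset B_r(x_n)$ and therefore $\mathscr{L}^d(C_n\cap B_r(x_n))\ge\omega_d(s_*\rho)^d$, a bound independent of $n$. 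For the finitely many $n<N$ one uses instead that $C_n$ is a convex body and $x_n\in\partial C_n=\overline{C_n^\circ}$, so $C_n^\circ\cap B_r(x_n)$ is a nonempty open set and $\mathscr{L}^d(C_n\cap B_r(x_n))>0$. Setting $\alpha(r)$ equal to the minimum of $\omega_d(s_*\rho)^d$ and these finitely many positive numbers completes the argument.

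The only genuinely delicate point is the eventual inclusion $B_\rho(z)\subset C_n$; everything after it is elementary (the ``ice-cream cone'' $\mathrm{conv}(\{x_n\}\cup B_\rho(z))$ and the inscribed ball $B'_n$). The step to double-check carefully is the measure lower bound $\tfrac12\omega_d\rho^d$ for the part of $B_\rho(w_n)$ on the $w_n$-side of the separating hyperplane, and the bookkeeping that this region sits inside $\overline B_{|z|+2\rho}$ uniformly in $n$, so that the contradiction with $L^1_{\rm loc}$ convergence is legitimate.
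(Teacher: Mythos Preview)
Your proof is correct and takes a genuinely different route from the paper's. The paper argues by contradiction: assuming $\mathscr{L}^d(C_n\cap B_r(x_n))\le 1/n$ along a bounded sequence, it introduces the spherical section $S_n=((C_n-x_n)/r)\cap\mathbb{S}^{d-1}$ and the cone $K_n$ with vertex $x_n$ over $S_n$; convexity forces $K_n\cap B_r(x_n)\subset C_n\cap B_r(x_n)$ and $C_n\setminus B_r(x_n)\subset K_n$, so $\mathscr{H}^{d-1}(S_n)\to 0$, whence $\mathscr{L}^d(C_n\cap\bar B_R)\to 0$ and, by the $L^1_{\rm loc}$ convergence, $\mathscr{L}^d(C\cap\bar B_R)=0$, contradicting the standing assumption $C^\circ\neq\emptyset$. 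Your argument is instead constructive: you first extract from the $L^1_{\rm loc}$ convergence the stability statement $B_\rho(z)\subset C_n$ for all large $n$ (a step the paper does not isolate), and then exhibit explicitly a ball $B_{s_*\rho}\subset C_n\cap B_r(x_n)$ via the ice-cream cone $\mathrm{conv}(\{x_n\}\cup B_\rho(z))$. What your approach buys is an explicit constant $\alpha=\omega_d(s_*\rho)^d$ depending transparently on $r$, the bound $M$ for the sequence, and the inradius of $C$; you are also right to note that the paper's $\alpha(r)$ carries the same hidden dependence on the bound of the sequence. The price is the separate bookkeeping for the finitely many indices $n<N$ (handled correctly, under the needed hypothesis $C_n^\circ\neq\emptyset$, which the paper's proof also tacitly uses in its contradiction step). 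Both proofs are short; yours is arguably more elementary since it avoids the solid-angle computation.
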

\begin{proof}
Let us assume that there exists a sequence $x_n\in \partial C_n\cap B_{\frac{R}{2}}$ with $R>2r$ such that
\[
\mathscr{L}^d(C_n\cap B_r(x_n)) \leq \frac{1}{n};
\]
we define the sets
\[
S_n = \left( \frac{C_n-x_n}{r}\right) \cap \mathbb{S}^{d-1}
\]
and the cone
\[
K_n=\{ x_n+t\nu: \nu\in S_n, t\geq 0\}.
\]
Since $C$ is convex, then
\[
K_n\cap B_r(x_n)\subset C_n\cap B_r(x_n), \quad
C_n\setminus B_r(x_n)\subset K_n\setminus B_r(x_n). 
\]
So in particular we have 
\begin{align*}
\frac{1}{n}\geq &
\mathscr{L}^d(C_n\cap B_r(x_n)) \geq
\mathscr{L}^d(K_n\cap B_r(x_n)) 
=\int_0^r \mathscr{H}^{d-1}(tS_n) dt
=\frac{r^d}{d} \mathscr{H}^{d-1} (S_n),
\end{align*}
and then $\mathscr{H}^{d-1}(S_n)\leq \frac{d}{nr^d}$. On the other hand, if we set
\[
K_{n,R}=\{ x_n+t\nu: \nu\in S_n, 0\leq t\leq 2R\}= K_n\cap B_{2R}(x_n),
\]
we also have   $C_n\cap (\overline{B}_R\setminus B_r(x_n))\subset K_n\cap (\overline{B}_R\setminus B_r(x_n)) 
\subset K_{n,R}\setminus B_r(x_n)$ and then,
since 
\[
\mathscr{L}^d (K_{n,R}\setminus B_r(x_n))=\int_r^{2R} \mathscr{H}^{d-1}(tS_n)dt
=\frac{2^dR^d-r^d}{d}\mathscr{H}^{d-1}(S_n),
\]
we obtain  
\begin{align*}
\mathscr{L}^d (C\cap \bar{B}_R)=&
\lim_{n\to +\infty} \left(
\mathscr{L}^d (C_n\cap \bar B_r(x_n))+\mathscr{L}^d (C_n\cap B_R\setminus \bar B_r(x_n))
\right)\\
\leq &
\lim_{n\to +\infty} \left(
\frac{1}{n}+\frac{2^dR^d-r^d}{d}\mathscr{H}^{d-1}(S_n)
\right)\\
\leq &
\lim_{n\to +\infty} \frac{1}{n}\left(
1+\frac{2^dR^d-r^d}{r^d}
\right)=0,
\end{align*}
and this is a contradiction.
\end{proof}

In the next lemma we show the connection between the $L^1$ convergence of characteristic functions 
of convex sets and the convergence of boundaries.
We recall that the Hausdorff distance between two sets $A,B\subset F$ is defined as
\[
d_\mathscr{H}(A,B)=\inf\{ t: A\subset (B)_t \mbox{ and } B\subset (A)_t \},
\]
where $(A)_t=\{ x\in F: {\rm dist}(x,A)<t\}$. On compact sets this distance induces the Kuratowski 
convergence. A sequence of compact sets $K_j$ converges to a set $K$ in the sense of Kuratowski if 
\begin{enumerate}
\item for any sequence $(x_j)_j$ of elements $x_j\in K_j$, if $x_j \to x$, then $x\in K$;
\item for any $x\in K$, there exists a sequence $(x_j)_j$ of elements $x_j\in K_j$ such that $x_j\to x$.
\end{enumerate}
Indeed, if $x_j\in K_j$ for every $j$ and $x_j\to x$, then $x\in K$ because for every $\varepsilon>0$ 
the points $x_j$ definitively belong to $K_\varepsilon$. Moreover, fixed $x\in K$, for every $j\in\N$ 
there is $\nu_j$ such that $K_i\subset K_{1/j}$ for $i\geq \nu_j$, hence we may select a sequence of points  
$x_j\in K_j$ converging to $x$.

\begin{Lemma}\label{lemmaConvL1Unif}
Let $C\subset \R^d$ be a convex set and let $(C_n)_n \subset \R^d$ be a sequence of convex sets
such that 
\[
\lim_{n\to +\infty} \gamma_F (C\Delta C_n) =0. 
\]
Then $\partial C_n$ converges uniformly on compact sets to $\partial C$, that is for every compact set 
$K$ the sequence $\partial C_n\cap K$ converges to $\partial C\cap K$ in the Hausdorff distance.
\end{Lemma}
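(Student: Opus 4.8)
The plan is to reduce the Gaussian hypothesis to local convergence in Lebesgue measure, and then to verify, for every $R>0$ and $\varepsilon>0$ and all $n$ large, the two inclusions
\[
\partial C_n\cap\overline B_R\subset(\partial C)_\varepsilon,\qquad \partial C\cap\overline B_R\subset(\partial C_n)_\varepsilon,
\]
which together express that $\partial C_n\to\partial C$ uniformly on compact sets and, by the equivalence recalled before the statement (on compact sets Kuratowski convergence coincides with convergence in the Hausdorff distance), give the claim. For the reduction: the Gaussian density $G_d$ is bounded below by a positive constant on each ball $\overline B_R$, so $\gamma_F(C\Delta C_n)\to0$ forces $\mathscr L^d((C\Delta C_n)\cap\overline B_R)\to0$ for every $R>0$; in particular $C_n\to C$ in $L^1_{\rm loc}(\R^d)$ and Lemma \ref{lemmaLowerBdd} is available. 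We assume $C^\circ\neq\emptyset$ (the case in all our applications). I will use throughout the elementary fact that if $x\in\partial C$ then for every $\rho>0$ both $\mathscr L^d(B_\rho(x)\cap C^\circ)>0$ and $\mathscr L^d(B_\rho(x)\setminus\overline C)>0$: the first because $x\in\overline{C^\circ}$ with $C^\circ$ open, the second because a supporting hyperplane $\{f=0\}$ of $C$ at $x$ (with $C\subset\{f\le0\}$) cannot contain the open half–ball $B_\rho(x)\cap\{f>0\}$, which is therefore disjoint from $C$.

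For the first inclusion (no spurious boundary) I argue by contradiction: if it fails along a subsequence, there are points $x_n\in\partial C_n\cap\overline B_R$ with $\dist(x_n,\partial C)\ge\varepsilon$, and passing to a further subsequence $x_n\to x$ with $\dist(x,\partial C)\ge\varepsilon$, so either $x\notin\overline C$ or $x\in C^\circ$, with a definite gap. If $x\notin\overline C$, pick $r>0$ with $B_{2r}(x)\cap C=\emptyset$; for $n$ large $B_r(x_n)\subset B_{2r}(x)$, and Lemma \ref{lemmaLowerBdd} gives $\mathscr L^d(C_n\cap B_{2r}(x))\ge\mathscr L^d(C_n\cap B_r(x_n))\ge\alpha(r)>0$, hence $\mathscr L^d((C\Delta C_n)\cap B_{2r}(x))\ge\alpha(r)$, contradicting $L^1_{\rm loc}$ convergence. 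If $x\in C^\circ$, choose $\delta>0$ with $B_\delta(x)\subset C$ and a unit vector $\nu_n$ with $C_n\subset\{y:\langle y-x_n,\nu_n\rangle\le0\}$ (supporting hyperplane of $\overline{C_n}$ at $x_n$); since $\langle x-x_n,\nu_n\rangle\to0$, for $n$ large the ball $B_{\delta/8}(x+\tfrac{\delta}{4}\nu_n)$ lies in $B_\delta(x)\subset C$ and in $\{y:\langle y-x_n,\nu_n\rangle>0\}$, hence is disjoint from $C_n$, so $\mathscr L^d((C\Delta C_n)\cap B_\delta(x))$ stays bounded below, again a contradiction.

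For the second inclusion (no boundary disappears) I again argue by contradiction: if it fails, there are $\varepsilon_0>0$ and, along a subsequence, $x_n\in\partial C\cap\overline B_R$ with $\dist(x_n,\partial C_n)\ge\varepsilon_0$; passing to a further subsequence $x_n\to x\in\partial C\cap\overline B_R$, so $B_{\varepsilon_0/2}(x)\cap\partial C_n=\emptyset$ for $n$ large. Since $B_{\varepsilon_0/2}(x)$ is connected and $\R^d\setminus\partial C_n=C_n^\circ\sqcup(\R^d\setminus\overline{C_n})$ is a disjoint union of two open sets, the ball lies entirely in one of them. In the first case $B_{\varepsilon_0/2}(x)\subset C_n$, so $\mathscr L^d((C\Delta C_n)\cap B_{\varepsilon_0/2}(x))\ge\mathscr L^d(B_{\varepsilon_0/2}(x)\setminus\overline C)>0$; in the second $B_{\varepsilon_0/2}(x)\cap C_n=\emptyset$, so $\mathscr L^d((C\Delta C_n)\cap B_{\varepsilon_0/2}(x))\ge\mathscr L^d(B_{\varepsilon_0/2}(x)\cap C^\circ)>0$. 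Either way the lower bound is a fixed positive number, contradicting $L^1_{\rm loc}$ convergence. This completes the two inclusions and hence the proof.

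I expect the second inclusion to be the main obstacle: the first follows fairly directly from Lemma \ref{lemmaLowerBdd} together with the supporting-hyperplane property, whereas for the second one must use \emph{both} that $C$ carries positive mass near each of its boundary points and that $C^c$ does, in order to trap $\partial C_n$ inside any neighborhood of a boundary point of $C$; the connectedness/topological dichotomy for $\R^d\setminus\partial C_n$ is the point to state carefully, and it is also what forces the nondegeneracy assumption $C^\circ\neq\emptyset$.
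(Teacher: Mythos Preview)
Your proof is correct and follows essentially the same route as the paper: the reduction from $\gamma_F$ to local Lebesgue convergence, the contradiction argument for both Hausdorff inclusions, the use of Lemma~\ref{lemmaLowerBdd} for the subcase where the limit point lies outside $\overline C$, and the connectedness dichotomy for the second inclusion are all present in the paper's proof. The only cosmetic differences are that you pass to convergent subsequences and build an explicit ball via the supporting hyperplane of $C_n$ in the $x\in C^\circ$ subcase, whereas the paper stays with the moving point $x_n$ and invokes the half-ball estimate $\mathscr L^d(B_{\varepsilon_0}(x_n)\setminus C_n)\ge\tfrac12\omega_d\varepsilon_0^d$ directly; your version is in fact slightly more careful in handling the possibly varying contradiction point in the second inclusion.
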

\begin{proof}
It suffices to prove the statement for $K=\overline{B_R(0)}$; we have 
\[
{\mathcal L}^d ((C_n\Delta C)\cap B_{R+1}(0))\leq 
(2\pi)^{\frac{d}{2}} e^{\frac{(R+1)^2}{2}} \gamma_F (C_n\Delta C).
\]
Assume by contradiction that there exists $\varepsilon_0>0$ such that for infinitely many $n\in \N$, 
either $\partial C_n\cap \overline{B_R(0)}\not\subset (\partial C)_{\varepsilon_0}\cap \overline{B_R(0)}$, or 
$\partial C\cap \overline{B_R(0)}\not\subset (\partial C_n)_{\varepsilon_0} \cap \overline{B_R(0)}$. In the first case 
there are infinitely many $n\in \N$ for which there exists $x_n\in \partial C_n\cap \overline{B_R(0)}$ but 
$x_n\not\in (\partial C)_{\varepsilon_0}\cap \overline{B_R(0)}$; we have two possibilities, either 
$B_{\varepsilon_0}(x_n)\subset C^\circ$ or $B_{\varepsilon_0}(x_n)\subset \R^d\setminus C$. 
If $B_{\varepsilon_0}(x_n)\subset C^\circ$,  then
\[
{\mathcal L}^d((C_n\Delta C)\cap B_{R+1}(0))\geq \mathcal{L}^d(B_{\varepsilon_0}(x_n)\setminus C_n)
\geq \frac{1}{2}\omega_d \varepsilon_0^d. 
\]
If $B_{\varepsilon_0}(x_n)\subset \R^d\setminus C$, then  by Lemma \ref{lemmaLowerBdd}  
\[
{\mathcal L}^d((C_n\Delta C)\cap B_{R+1}(0))\geq \mathcal{L}^d(B_{\varepsilon_0}(x_n)\cap C_n)
\geq \alpha(\varepsilon_0);
\]
In both cases
\[
\limsup_{n\to +\infty}{\mathcal L}^d((C_n\Delta C)\cap B_{R+1}(0))>0.
\]
Similarly, if there exists $x\in \partial C\cap \overline{B_R(0)}$ such that for infinitely many $n\in \N$,
$B_{\varepsilon_0}(x)\cap \partial C_n=\emptyset$, then either $B_{\varepsilon_0}(x)\subset C_n^\circ$
or $B_{\varepsilon_0}(x)\subset \R^d\setminus C_n$, and then again either
\[
{\mathcal L}^d((C_n\Delta C)\cap B_{R+1}(0))\geq \mathcal{L}^d(B_{\varepsilon_0}(x)\setminus C)
\geq \frac{1}{2}\omega_d \varepsilon_0^d,
\]
or
\[
{\mathcal L}^d((C_n\Delta C)\cap B_{R+1}(0))\geq \mathcal{L}^d(B_{\varepsilon_0}(x)\cap C), 
\]
so that, again, 
\[
\limsup_{n\to +\infty}{\mathcal L}^d((C_n\Delta C)\cap B_{R+1}(0))>0.
\]
contradicting the fact that ${\mathcal L}^d((C_n\Delta C)\cap B_{R+1}(0))\to 0$ as $n\to +\infty$.
\end{proof}

\begin{Proposition}\label{propApproxConv}
Let $C\subset \R^d$ be a closed convex set. Then  for any $\delta>0$ there exists a closed convex set 
$C_\delta$ such that $C\subset C_\delta^{\circ}$, $\partial C_\delta$ is smooth and
\[
\lim_{\delta \to 0} \gamma_F\left( C_\delta\setminus C\right)=0. 
\]
\end{Proposition}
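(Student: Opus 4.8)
The plan is to realize $C_\delta$ as a sublevel set of a mollification of the Minkowski functional of $C$. We may assume $C\neq\R^d$ (otherwise take $C_\delta=C$) and, as throughout the Appendix, that $0\in C^\circ$; let $\m$ be the Minkowski functional of $C$. By Proposition~\ref{propMink}, $\m$ is convex, positively homogeneous, $\frac1r$-Lipschitz continuous with $r=\sup\{t>0:B_t(0)\subset C\}$, and $C=\{\m\le1\}$; more generally $\{\m\le s\}=sC$ for every $s>0$, by homogeneity.

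First I would fix a standard mollifier $\rho_\delta\in C_c^\infty(B_\delta(0))$ with $\rho_\delta\ge0$ and $\int_{\R^d}\rho_\delta=1$, and set $g_\delta=\m*\rho_\delta$. Then $g_\delta\in C^\infty(\R^d)$; it is convex, since the convolution of a convex function with a nonnegative kernel is convex; and the Lipschitz bound on $\m$ gives $\|g_\delta-\m\|_\infty\le\delta/r$. I would then define
\[
C_\delta:=\{x\in\R^d:\ g_\delta(x)\le 1+2\delta/r\},
\]
which is closed and convex. Since $\m$ vanishes at $0$, we have $\min g_\delta\le\delta/r<1+2\delta/r$, so $C_\delta$ is a convex body. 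A smooth convex function has non-vanishing gradient at every point of a level set lying strictly above its minimum (a zero of the gradient would be a global minimum), hence $\nabla g_\delta\neq0$ on $\{g_\delta=1+2\delta/r\}$; by the regular value theorem this set is a smooth embedded hypersurface, and it equals $\partial C_\delta$ because the interior of $\{g_\delta\le c\}$ is $\{g_\delta<c\}$ whenever $c>\min g_\delta$. Thus $\partial C_\delta$ is smooth.

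The two-sided bound $|g_\delta-\m|\le\delta/r$ then yields the chain of inclusions
\[
C=\{\m\le1\}\subseteq\{g_\delta<1+2\delta/r\}\subseteq C_\delta\subseteq\{\m\le1+3\delta/r\}=(1+3\delta/r)C .
\]
The first inclusion shows $C\subseteq C_\delta^\circ$, while the last gives $C_\delta\setminus C\subseteq(1+3\delta/r)C\setminus C$. The convex sets $(1+3\delta/r)C$ decrease, as $\delta\to0^+$, to $\bigcap_{\delta>0}(1+3\delta/r)C=\overline C=C$; since $\gamma_F$ is a finite measure, continuity from above gives $\gamma_F\big(C_\delta\setminus C\big)\le\gamma_F\big((1+3\delta/r)C\setminus C\big)\to0$, which is the assertion.

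The only point requiring real care is the smoothness of $\partial C_\delta$: one must be sure the level value $1+2\delta/r$ lies strictly above $\min g_\delta$, so that $\nabla g_\delta$ cannot vanish there, and this is exactly where the quantitative information of Proposition~\ref{propMink} (that $\m$ is $\frac1r$-Lipschitz and attains the value $0$) is used. The remaining verifications — convexity of $g_\delta$, the Lipschitz estimate $\|g_\delta-\m\|_\infty\le\delta/r$, and the measure-continuity argument — are routine.
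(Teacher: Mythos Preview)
Your proof is correct and follows the same overall strategy as the paper---mollify a Minkowski functional and take a sublevel set---but with two simplifications worth noting. The paper first enlarges $C$ to its $\delta$-neighbourhood $(C)_\delta=\{y:d(y,C)\le\delta\}$, takes the Minkowski functional $\m$ of $(C)_\delta$, mollifies that, and sets $C_\delta=\{\m*\rho_\delta\le1\}$; you instead work with the Minkowski functional of $C$ itself and compensate by raising the threshold to $1+2\delta/r$. This makes the inclusion $C\subset C_\delta^\circ$ and the sandwich $C_\delta\subset(1+3\delta/r)C$ immediate from the Lipschitz bound $\|g_\delta-\m\|_\infty\le\delta/r$, whereas the paper argues the former from the geometry of $(C)_\delta$ and the latter by a pointwise-convergence argument. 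More significantly, for the smoothness of $\partial C_\delta$ the paper computes the radial derivative $\langle\nabla\m_\delta(x),x\rangle$ explicitly via Euler's identity and shows it is positive by a quantitative estimate that also requires $\delta$ to be small; your observation that a smooth convex function has no critical points on any level set strictly above its infimum is both shorter and valid for every $\delta>0$. The paper's route keeps the level value fixed at $1$, which is cosmetically pleasant, but your argument is cleaner overall.
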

\begin{proof} Fix $\delta >0$ and set 
\[
(C)_\delta =\{ y: d(y,C)\leq\delta\}. 
\]
Then $(C)_\delta$ is convex and contains $C$. A result of Federer \cite{Fed59Cur} implies that the 
boundary of $(C)_\delta$ is $C^{1,1}$ if  $\delta$ is sufficiently small; however that is not enough 
for our aims. 

Fix $\delta >0$, let $\m$ be the Minkowski function of  $(C)_\delta$ and let 
$\varrho \in C^{\infty}_c(\R^d)$ be a standard mollifier. For $\eta >0$ define
as usual $\varrho_\eta(x) = \varrho(x/\eta)/\eta^d$,  
\[
\m_\eta  =\m*\varrho_\eta, 
\]
and consider the set 
\[
C_{\delta} = \{ x\in \R^d:\; \m_{\delta }  (x)\leq 1\}. 
\]
Since $\m$ is convex, each $\m_\eta$ is convex too. Indeed,   
\begin{align*}
\m_\eta (\lambda x_1+(1-\lambda)x_2)
=& \int \m(\lambda x_1+(1-\lambda) x_2-y)\varrho_\eta(y)dy \\
=&\int \m(\lambda (x_1-y)+(1-\lambda) (x_2-y))\varrho_\eta(y)dy \\
\leq &\lambda \int \m(x_1-y)\varrho(y)dy +(1-\lambda)\int \m(x_2-y)\varrho(y)dy \\
=&\lambda\m_\eta(x_1)+(1-\lambda)\m_\eta(x_2) .
\end{align*}
Therefore,  $C_{\delta}$ is a convex set. 

Let us prove that $C_{\delta}^{\circ} \supset C$. For every $x\in C$, the ball $B_{\delta}(x)$ is 
contained in $(C)_{\delta}$, and then 
\[
\m_\delta(x)=\int_{B_\delta(x)} \m(y)\varrho_\delta(x-y)dy \leq 1
\] 
since $\m(y)\leq 1$ for all $y\in B_\delta(x)$. This shows that $C \subset C_{\delta}$. To prove 
the inclusion $C \subset C_{\delta}^{\circ}$ we remark that $\overline{B_{\delta}(x)}\cap C$ has 
positive Lebesgue measure and it is contained in $(C)_{\delta}^{\circ}$, therefore the restriction 
of  $\m$ to $\overline{B_{\delta}(x)}\cap C$ has maximum strictly less than $1$ and the
integral above is strictly less than $1$. This shows that $C \subset C_{\delta}^{\circ}$. 

Let us  prove that if $\delta $ is sufficiently small then  the boundary of $C_{\delta} $ is smooth. 
We have only to show that the gradient of $\m_\delta$ does not vanish at the boundary. To this aim it 
is sufficient to show that  for every $x$ such that $ \m_{\delta}(x)=1$ we have 
$\langle \nabla \m_{\delta}(x), x\rangle \neq 0$. 

Let $r>0$ be such that $B_r(0)\subset C$ and let 
\[
(i) \;\delta <r/4, \quad (ii)\; \delta \int_{\R^d} |u|\varrho(u)du < r/2. 
\]
For every $x$ such that $ \m_{\delta}(x)=1$ there exists $\bar x\in B_\delta(x)$ such that 
$\m(\bar x)\geq 1$ (otherwise we would get $ \m_{\delta}(x)<1$).  Since 
$B_r(0)\subset ( C)_{\delta}$, then $\m$ is $1/r$-Lipschitz, so that  for every $y\in \R^d$ we have
$\m(y) \geq \m({\bar x}) - \|y - {\bar x}\|/r$, and hence by (i)
\[
\m(y)\geq \frac{1}{2},\qquad \forall y\in B_\delta(x). 
\]
Consequently, 
\begin{align*}
\langle \nabla \m_{\delta}(x), x\rangle & = 
\int_{\R^d}  \langle \nabla \m (y), x-y\rangle  \varrho_{\delta}(x-y)dy + 
\int_{\R^d}  \langle \nabla \m (y), y\rangle  \varrho_{\delta}(x-y)dy
\\
&= \int_{\R^d}  \langle \nabla \m (y), x-y\rangle  \varrho_{\delta}(x-y)dy + 
\int_{B_{\delta}(x)}   \m (y)   \varrho_{\delta}(x-y)dy.
\end{align*}
The modulus of the first integral does not exceed
\[ 
\frac{1}{r} \int_{\R^d} |x-y| \varrho_{\delta}(x-y)dy   = 
\frac{\delta}{r} \int_{\R^d} |u|\varrho(u)du <\frac{1}{2}
\]
while the second integral is $\geq 1/2$. Therefore,   $\langle \nabla \m_{\delta}(x), x\rangle >0$. 

To prove the last statement it is sufficient to show that for every $x\notin C$ we have 
$\m_{\delta}(x)>1$ if $\delta $ is small enough. Indeed, in this case $\one_{C_{\delta}\setminus C}$ 
goes to $0$ pointwise as $\delta \to 0$, so that $\gamma_F(C_{\delta}\setminus C)$ vanishes as 
$\delta \to 0$. Let $\delta_0= $ dist$(x, C)>0$ and let $\delta<\delta_0/2$. Then 
$\overline{B_{\delta}(x)} \cap (C)_\delta =\emptyset $, so that $\m >1$ in 
$\overline{B_{\delta}(x)} \cap (C)_\delta $ and 
$\min\{\m(y):\;y\in \overline{B_{\delta}(x)}  \cap (C)_\delta \}>1$. Consequently, 
\[
\m_{\delta}(x) = \int_{B_x(\delta)} \m(y)\varrho_{\delta}(x-y)dy >1. 
\]
\end{proof}

We conclude this section approximating  an infinite dimensional open 
convex set by finite dimensional regular open convex sets. 

\begin{Proposition}
Let $\Omega\subset X$ be an open convex set. Then there exists a sequence of open convex cylindrical 
sets $\Omega_n\supset \Omega_{n+1}\supset \Omega$ with   smooth boundaries, such that 
  \[
\lim_{n\to +\infty} \gamma (\Omega_n\setminus \Omega)=0.
\]
\end{Proposition}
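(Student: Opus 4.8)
The plan is to realize $\overline\Omega$ as a countable intersection of closed half-spaces, to take finite intersections of the corresponding open half-spaces as a first, polyhedral, approximating sequence, and then to round off each polyhedron by means of Proposition~\ref{propApproxConv}, performing the rounding inductively so that the decreasing property is preserved. The case $\Omega=X$ being trivial, I assume $\emptyset\neq\Omega\subsetneq X$, whence $\overline\Omega\neq X$.

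\emph{Polyhedral step.} Fix a countable dense set $D\subset X$. For $z\in D\setminus\overline\Omega$ and rational $q\in(0,\dist(z,\overline\Omega))$ the open ball $B_q(z)$ is disjoint from the closed convex set $\overline\Omega$ and has nonempty interior, so the Hahn--Banach separation theorem produces $x_{z,q}^*\in X^*$ with $\|x_{z,q}^*\|=1$ and $c_{z,q}:=\sup_{\overline\Omega}x_{z,q}^*\leq x_{z,q}^*(z)-q$. Enumerating the countably many pairs so obtained as $(x_k^*,c_k)_k$, each half-space $H_k=\{x:x_k^*(x)\leq c_k\}$ contains $\overline\Omega$, and $\overline\Omega=\bigcap_kH_k$: given $y\notin\overline\Omega$, pick $z\in D$ with $\|z-y\|<\tfrac13\dist(y,\overline\Omega)$ and a rational $q$ between $\tfrac13\dist(y,\overline\Omega)$ and $\tfrac23\dist(y,\overline\Omega)$, so that $\|y-z\|<q<\dist(z,\overline\Omega)$, hence $x_{z,q}^*(y)\geq x_{z,q}^*(z)-\|y-z\|>x_{z,q}^*(z)-q\geq c_{z,q}$, i.e. $y\notin H_{z,q}$. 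Since $\Omega$ is open, $\Omega\subset\{x_k^*<c_k\}$ for all $k$, so the open convex sets $Q_n:=\bigcap_{k\leq n}\{x:x_k^*(x)<c_k\}$ decrease with $n$, are cylindrical over $F_n:=\mathrm{span}(Qx_1^*,\dots,Qx_n^*)\subset Q(X^*)$, and satisfy $\Omega\subset\bigcap_nQ_n\subset\bigcap_kH_k=\overline\Omega$, $\overline{Q_n}=\bigcap_{k\leq n}H_k$, and $\bigcap_n\overline{Q_n}=\overline\Omega$. By \cite[Prop.~4.2]{CasLunMirNov} $\gamma(\partial\Omega)=0$, so $\gamma(Q_n\setminus\Omega)=\gamma(Q_n)-\gamma(\Omega)\downarrow0$.

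\emph{Rounding step.} Set $\Omega_0:=X$ and suppose $\Omega_{n-1}=\pi_{F_{n-1}}^{-1}(\mathscr{O}_{n-1})$ has been built, with $\mathscr{O}_{n-1}\subset F_{n-1}$ open convex with smooth boundary and $\overline{Q_{n-1}}\subset\Omega_{n-1}$. Writing $\Omega_{n-1}$ as a cylinder over $F_n\supset F_{n-1}$, say $\Omega_{n-1}=\pi_{F_n}^{-1}(\hat S_{n-1})$ with $\hat S_{n-1}\subset F_n$ open convex, I get (using that $\pi_{F_n}$ is a continuous open surjection) that $\overline{P_n}:=\pi_{F_n}(\overline{Q_n})$ is the closure of the open convex polyhedron $P_n:=\pi_{F_n}(Q_n)$, that $\overline{Q_n}=\pi_{F_n}^{-1}(\overline{P_n})$, and that $\overline{Q_n}\subset\overline{Q_{n-1}}\subset\Omega_{n-1}$ forces $\overline{P_n}\subset\hat S_{n-1}$. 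The crucial observation is that a polyhedron $P$ contained in an open convex set $U\subset\R^d$ lies at positive distance from $U^c$: the function $\dist(\cdot,U^c)$ is concave and strictly positive on $P$, and a concave function that is bounded below is non-decreasing along each recession direction of $P$, so its infimum over $P$ equals its positive minimum over the finitely many extreme points in the Minkowski--Weyl decomposition of $P$; thus $\delta_n^*:=\dist(\overline{P_n},F_n\setminus\hat S_{n-1})>0$. (This is where the polyhedral nature of $\overline{P_n}$ is essential — a generic closed convex subset of an open convex set may have zero distance to the complement.) Applying Proposition~\ref{propApproxConv} to $\overline{P_n}$ gives, for each $\delta>0$, a convex set $C_\delta$ with smooth boundary with $\overline{P_n}\subset C_\delta^\circ$, $\gamma_{F_n}(C_\delta\setminus\overline{P_n})\to0$, and (as its proof shows, since $C_\delta$ is a sublevel set of a mollification of the Minkowski functional of $(\overline{P_n})_\delta$) $C_\delta\subset(\overline{P_n})_\delta$. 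Choose $\delta=\delta_n\in(0,\delta_n^*)$ so small that moreover $\gamma_{F_n}(C_{\delta_n}\setminus\overline{P_n})<1/n$ and $\pi_{F_n}^{-1}\big((\overline{P_n})_{\delta_n}\big)\subset(\overline{Q_n})_{1/n}$ (the $1/n$-neighbourhood in $X$), and put $\mathscr{O}_n:=C_{\delta_n}^\circ$, $\Omega_n:=\pi_{F_n}^{-1}(\mathscr{O}_n)$. Then $\Omega_n$ is open convex cylindrical with smooth boundary; $\overline{P_n}\subset\mathscr{O}_n$ gives $\overline{Q_n}=\pi_{F_n}^{-1}(\overline{P_n})\subset\Omega_n$; $\overline{\mathscr{O}_n}=C_{\delta_n}\subset(\overline{P_n})_{\delta_n}\subset\hat S_{n-1}$ gives $\overline{\Omega_n}\subset\Omega_{n-1}$; $\Omega_n\subset(\overline{Q_n})_{1/n}$; and, since $\gamma\circ\pi_{F_n}^{-1}=\gamma_{F_n}$ and $\partial P_n$ is $\gamma_{F_n}$-negligible, $\gamma(\Omega_n\setminus Q_n)=\gamma_{F_n}(\mathscr{O}_n\setminus P_n)\leq\gamma_{F_n}(C_{\delta_n}\setminus\overline{P_n})+\gamma_{F_n}(\partial P_n)<1/n$. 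This closes the induction. The sets $\Omega_n$ are open convex cylindrical with smooth boundaries, $\Omega_n\supset\Omega_{n+1}\supset\Omega$ and $\overline{\Omega_{n+1}}\subset\Omega_n$; since $\overline{\Omega_n}\subset\{x:\dist(x,\overline{Q_n})\leq1/n\}$ and $\overline{Q_n}\downarrow\overline\Omega$ one also gets $\bigcap_n\overline{\Omega_n}=\overline\Omega$ (as needed in Section~\ref{approxcx}), while $\gamma(\Omega_n\setminus\Omega)\leq\gamma(\Omega_n\setminus Q_n)+\gamma(Q_n\setminus\Omega)<1/n+\gamma(Q_n\setminus\Omega)\to0$.

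The main obstacle is precisely the interplay between rounding and monotonicity: one cannot smooth the $Q_n$ independently, because a finite intersection of smooth convex sets generically has corners, and the inductive rounding works only because at each stage the set to be rounded is a \emph{polyhedron}, and hence sits at positive distance from the boundary of the (possibly unbounded) smooth convex cylinder already constructed, leaving room for a smooth enlargement that still fits inside.
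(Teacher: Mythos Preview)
Your argument follows the same three-stage strategy as the paper: write $\overline\Omega$ as a countable intersection of closed half-spaces, pass to the finite intersections $Q_n$ (the paper's $A_n$), and then smooth the resulting finite-dimensional polyhedra via Proposition~\ref{propApproxConv}. The only substantive difference is in how the nesting $\Omega_{n+1}\subset\Omega_n$ is secured. The paper simply asserts that taking the parameters $\delta_n$ in Proposition~\ref{propApproxConv} to be decreasing suffices; your inductive argument is more explicit. You first observe that the closed polyhedron $\overline{P_n}$ sits at \emph{positive} distance from the complement of the previously constructed smooth set $\hat S_{n-1}$ (via concavity of the distance-to-complement function and the Minkowski--Weyl structure of polyhedra), and then exploit the containment $C_\delta\subset(\overline{P_n})_\delta$---which indeed follows from Jensen's inequality applied to the convex Minkowski functional in the construction of Proposition~\ref{propApproxConv}---to fit the new smoothed set inside the old one for $\delta$ small enough. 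This makes the monotonicity step completely transparent, and you also verify directly that $\bigcap_n\overline{\Omega_n}=\overline\Omega$, which Section~\ref{approxcx} uses but the paper's proof does not address separately. One very minor point: if $\overline{P_n}$ has nontrivial lineality space it has no extreme points, but your argument goes through unchanged with $V$ taken to be any finite generating set in a representation $\overline{P_n}=\mathrm{conv}(V)+K$.
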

\begin{proof}
Since $\Omega$ is an open convex set, then $\gamma(\partial \Omega)=0$. Since $\overline\Omega$ is a closed 
convex set and $X$ is separable, by the  Lindel\"of theorem, see e.g. \cite[Theorem I.4.14]{DS} we have 
\[
\overline\Omega =\bigcap_{j\in \N} \overline S_j,
\]
where
\[
S_j=S(x^*_j, a_j)=\{ x\in X: x^*_j(x) < a_j\}
\] 
with $x^*_j\in X^*\setminus\{0\}$, $a_j\in \R$, are open half-spaces containing $\Omega$. The set
\[
A_n=S_1\cap\ldots\cap S_n
\]
is an open convex set containing $\Omega$,  and  
$\overline A_n =\overline S_1\cap\ldots\cap\overline S_n$ contains $\overline\Omega$. Then, 	
\[
\gamma(\Omega)=\gamma(\overline \Omega) = \lim_{n\to +\infty} \gamma(\overline A_n).
\]
We denote by $F_n$ the linear span of the vectors $x^*_1,\ldots, x^*_n$, which is a subspace of $H$ 
of dimension $d\leq n$. We fix an orthonormal (along $H$) basis $\{h_1, \ldots, h_d\}$ of $F_n$ 
contained in $Q(X^*)$ and we define the projection 
$\Pi_n :X\to F_n$,  $\Pi_n (x) = \sum_{j=1}^d \hat{h}_j(x)h_j$. 
The induced measure $ \gamma \circ \Pi_n^{-1}$ in $F_n$ is denoted by $ \gamma_n$; if $F_n$ is 
identified with $\R^d$ through the isomorphism $h\mapsto ([h, h_1]_H, \ldots [h, h_d]_H)$, then 
$ \gamma_n$ is just the standard Gaussian measure in $\R^d$. 

Then, $\overline A_n=\Pi_n^{-1}(C_n)$, $C_n$ is a polyhedral closed convex set in $F_n$ with 
$\gamma(\overline A_n)=\gamma_n (C_n)$. By Proposition \ref{propApproxConv}, for any $n$ we find a 
smooth open convex set $\mathscr{O}_n$ with smooth boundary such that $C_n\subset \mathscr{O}_n$ and
\[
\gamma_n (\mathscr{O}_n\setminus C_n)\leq \frac{1}{n}.
\] 
We may then define $\Omega_n=\Pi_n^{-1}(\mathscr{O}_n)$. Such sets are open cylindrical convex sets,
and 
$\Omega\subset \Omega_n$ for any $n\in \N$. If $F_n=F_{n+1}$, i.e. $x^*_{n+1}$
is a linear combination of $x^*_1,\ldots,x^*_n$, then $C_{n+1}\subset C_n$, otherwise $F_{n+1}=F_n\times \R$ 
and $C_{n+1}\subset C_n\times \R$.   To get  the inclusion $\Omega_{n+1}\subset\Omega_n$, 
it suffices to apply Proposition \ref{propApproxConv} with decreasing sequences $(\delta_n)$  
in place of $\delta $. 

Moreover, 
\begin{align*}
\gamma(\Omega) \leq &
\liminf_{n\to+\infty} \gamma (\Omega_n)
=
\liminf_{n\to +\infty} \gamma_n (\mathscr{O}_n) 
\leq 
\lim_{n\to +\infty} \left( 
\gamma_n (C_n)+\frac{1}{n}
\right)
=\lim_{n\to +\infty} \gamma(\overline A_n) =\gamma(\overline \Omega)
=\gamma(\Omega),
\end{align*}
then the conclusion follows. 
\end{proof}

\end{document}